\documentclass[12pt]{article}
\usepackage[mathscr]{eucal}
\usepackage{amssymb}
\usepackage{latexsym}
\usepackage{amsthm}
\usepackage{amsmath}
\usepackage[dvipdfmx]{graphicx}
\usepackage{psfrag}
\usepackage{a4wide}

\newcommand{\R}{\mathbb{R}}

\newcommand{\la}{\langle}
\newcommand{\ra}{\rangle}

\newtheorem{theorem}{Theorem}[section]
\newtheorem{proposition}[theorem]{Proposition}
\newtheorem{lemma}[theorem]{Lemma}
\newtheorem{corollary}[theorem]{Corollary}
\theoremstyle{definition}
\newtheorem{definition}[theorem]{Definition}

\newtheorem{remark}[theorem]{Remark}

\newtheorem{example}[theorem]{Example}

\makeatletter
\@addtoreset{equation}{section}

\makeatother

\title{Algebro-combinatorial generalizations of the Victoir method for constructing weighted designs}
\author{Hiroshi Nozaki\thanks{Department of Mathematics Education, 
	Aichi University of Education, 
	1 Hirosawa, Igaya-cho, 
	Kariya, Aichi 448-8542, 
	Japan. {\tt hnozaki@auecc.aichi-edu.ac.jp}} and 
    Masanori Sawa\thanks{
    Graduate School of System Informatics, Kobe University, 1-1 Rokkodai, Nada, Kobe 657-
8501, Japan. 
    {\tt sawa@people.kobe-u.ac.jp}}}

\begin{document}

\maketitle

\renewcommand{\thefootnote}{\fnsymbol{footnote}}
\footnote[0]{2020 Mathematics Subject Classification: 65D32 (05B15, 05E30)
}

\begin{abstract}
  A weighted $t$-design in $\mathbb{R}^d$ is a finite weighted set that exactly integrates all polynomials of degree at most $t$ with respect to
  a given probability measure.
   A fundamental problem is to construct weighted $t$-designs with as few points as possible. 
  Victoir (2004) proposed a method to reduce the size of weighted $t$-designs while preserving the $t$-design property by using combinatorial objects such as combinatorial designs or orthogonal arrays with two levels.
  In this paper, we give an algebro-combinatorial generalization of both Victoir's method and its variant by the present authors (2014) in the framework of Euclidean polynomial spaces, enabling us to reduce the size of weighted designs obtained from the classical product rule. 
  Our generalization allows the use of orthogonal arrays with arbitrary levels, whereas Victoir only treated the case of two levels.
  As an application, we present a construction of equi-weighted $5$-designs with $O(d^4)$ points for product measures such as Gaussian measure $\pi^{-d/2} e^{-\sum_{i=1}^d x_i^2} dx_1 \cdots dx_d$ on $\mathbb{R}^d$ or equilibrium measure $\pi^{-d} \prod_{i=1}^d (1-x_i^2)^{-1/2} dx_1 \cdots dx_d$ on $(-1,1)^d$, where $d$ is any integer at least 5. 
  The construction is explicit and does not rely on numerical approximations.
  Moreover, we establish an existence theorem of Gaussian $t$-designs with $N$ points for any $t \geq 2$, where $N< q^{t}d^{t-1}=O(d^{t-1})$ for fixed sufficiently large prime power $q$. As a corollary of this result, we give an improvement of a famous theorem by Milman (1988) on isometric embeddings of the classical finite-dimensional Banach spaces.
 \end{abstract}
\bigskip

\noindent
\textbf{Keywords:}
Weighted design, Euclidean polynomial space, Product rule, Orthogonal array, Linear code, Gaussian design, Equilibrium design, Isometric embedding, Spherical design, Association scheme

\section{Introduction} \label{sec:introduction}

A {\it weighted $t$-design} in $\R^d$ with respect to
a probability measure $\mu$
is a finite subset $X \subset \R^d$ with a positive weight function $w : X \to \mathbb{R}_{>0}$ such that
\[
\int_{\R^d} f(x) \, d\mu(x) = \sum_{x \in X} w(x) f(x)
\]
for all polynomials $f \in \mathbb{R}_t[x_1, \ldots, x_d]$,
where $\mathbb{R}_t[x_1, \ldots, x_d]$ denotes the space of real polynomials of total degree at most $t$ in $d$ variables; in particular, this is called an {\it (equi-weighted) $t$-design} if $w$ is constant.
We refer the reader to Bannai et al.~\cite{BBIT2021} for a good introduction to the basics on design theory in algebraic combinatorics.
On the other hand, a weighted design has long been studied in the context of cubature formula in numerical analysis and related areas, with emphasis on the connection to the theory of orthogonal polynomials.
A comprehensive textbook for such analytic aspects of design theory is Dunkl and Xu~\cite{DX2014}.

One of the major problems in design theory is to construct a small-sized weighted $t$-design for given values of $d$ and $t$.  
Suppose a measure $\mu$ is expressed in terms of a density function $\phi(x)$, that is, $d\mu(x) = \phi(x)\,dx$.  
The product rule constructs weighted $t$-designs as Cartesian products of lower-dimensional weighted $t$-designs,  
provided that the density function $\phi(x)$ can be written as a Cartesian product of lower-dimensional density functions.
This is exemplified by the Gaussian measure $\pi^{-d/2} e^{-\sum_{i=1}^d x_i^2} dx_1 \cdots dx_d$ on $\mathbb{R}^d$ and the equilibrium measure $\pi^{-d} \prod_{i=1}^d (1-x_i^2)^{-1/2} dx_1 \cdots dx_d$ on $(-1,1)^d$ (see Section~\ref{sec:application} in this paper).
Although the product rule can respond to the preference for simplicity of construction, it has the serious drawback that the size of the resulting weighted design grows exponentially. 
In this paper, we provide a method for reducing the size of designs obtained through the product rule. 

A main purpose of this paper is to develop a generalization of both Victoir's method~\cite{V04} and its variant by the present authors \cite{NS13}
in the framework of Euclidean polynomial spaces; see Definition~\ref{def:spherical_poly} of this paper.
The Victoir method reduces the size of a weighted design by replacing specific point-subsets with structured configurations associated with combinatorial objects such as combinatorial designs or orthogonal arrays.  
Combinatorial designs can be interpreted as designs in Johnson schemes, while orthogonal arrays correspond to designs in Hamming schemes.  
These designs admit two main types of generalization:  
one is the design in association schemes, introduced by Delsarte \cite{D73};  
the other is the design in polynomial spaces, proposed by Godsil \cite{G2}.  
These two notions are slightly different, in the sense that an object satisfying one definition may not satisfy the other. 
Delsarte’s designs are based on the representation theory of finite groups, whereas Godsil’s designs focus on approximating the average of polynomial functions over the entire set.  
Since our goal is to reduce the size of weighted designs, Godsil’s framework is quite effective in our context.

In design theory, a major recent breakthrough is the explicit construction of spherical $t$-designs in $\mathbb{R}^d$ for arbitrary values of $d$ and $t$ \cite{BNOZ2022, X22}.
As Xiang \cite{X22} also described, constructing weighted designs on a computer using approximations of real numbers with arbitrary precision is important in numerical analysis, but it cannot be regarded as an explicit construction method.
As an application of our generalization, we present an explicit construction of equi-weighted $5$-designs with $O(d^4)$ points for Gaussian measure or equilibrium measure, where $d$ is any integer at least $5$ (Theorems~\ref{thm:HK_1} and~\ref{thm:equilibrium1}). 
We also prove, in a manner not entirely constructive, an existence theorem of Gaussian $t$-designs with $N$ points for any $t \geq 2$, where $N<q^{t}d^{t-1}=O(d^{t-1})$ for fixed sufficiently large prime power $q$ (Theorem~\ref{thm:OA_1}).

This paper is organized as follows.  
In Section~\ref{sec:poly_sp}, we review the precise definition of designs for polynomial spaces and some fundamental results.  
We slightly generalize the class of spherical polynomial spaces, which include those arising from association schemes, by introducing the concept of Euclidean polynomial spaces. 
This generalization extends the range of applicable combinatorial structures, including regular $t$-wise balanced designs. 
In Section~\ref{sec:generalization_Victoir}, we generalize Victoir's method to designs for polynomial spaces (Theorem~\ref{thm:main}). 
To this end, we extend some known results from \cite{G1,G2}.  
This generalization enables the use of orthogonal arrays of arbitrary levels, beyond level 2 which is the only case treated in Victoir~\cite{V04}. 
In Section~\ref{sec:application}, we first present an explicit construction of orthogonal arrays of strength~$t$ with small run sizes by using the dual codes of certain extended BCH codes. 
We then apply Theorem \ref{thm:main} (and Corollaries~\ref{cor:main_finite} and \ref{cor:Nozaki_1}) with those orthogonal arrays to large weighted designs obtained via the product rule.
This not only leads to an explicit construction of equi-weighted $5$-designs with $O(d^4)$ points for Gaussian measure or equilibrium measure, but also establishes an existence theorem of weighted $t$-designs with relatively few points in high dimensions.
Remarkably, the latter result includes an improvement of a famous result by Milman~\cite{Milman1988} (see also~\cite{LV1993}) on isometric embeddings of the classical finite-dimensional Banach spaces (Corollary~\ref{cor:OA_1}). 
Finally, in Section~\ref{sec:concluding}, we discuss several directions for future work.

\section{Polynomial spaces and their designs}\label{sec:poly_sp}

We provide several terminologies and fundamental results for polynomial spaces, which are shown in \cite{G1,G2}. 
Let $\Omega$ be a set whose size is not necessarily finite. 
The {\it separation function} $\rho$ on $\Omega$ is a function from $\Omega \times \Omega$ to the real field $\R$. Any field is possible instead of $\R$, but we use only $\R$ in this paper. 
For given $a \in \Omega$, the function
\[
\rho(a,\xi)=\rho_a(\xi)
\]
is interpreted as that from $\Omega$ to $\R$. 
For $f \in \R[x]$, the {\it zonal polynomial} with respect to $a\in \Omega$ is $f\circ \rho_a: \Omega \rightarrow \R$. 
Let $Z(\Omega,r)$ denote the linear space spanned by all zonal polynomials of degree at most $r$: 
\[
Z(\Omega, r)={\rm Span}_{\R}\{f \circ \rho_a \mid a \in \Omega, f \in \R[x], {\rm deg}f\le r\}.
\]
Let ${\rm Pol}(\Omega,0)$ be the space of constant functions on $\Omega$ and ${\rm Pol}(\Omega,1)=Z(\Omega,1)$. 
We inductively define
\begin{align*}
    {\rm Pol}(\Omega,r+1)={\rm Span}_\R \{gh \mid g \in {\rm Pol}(\Omega,r), h \in {\rm Pol}(\Omega,1)\}.
\end{align*}
It is clear that $Z(\Omega, r)$ is a subspace of ${\rm Pol}(\Omega,r)$. 
The two spaces $Z(\Omega, r)$ and ${\rm Pol}(\Omega, r)$ are in general different. For instance, $\rho_a \rho_b$ belongs to ${\rm Pol}(\Omega, 2)$, but it may not belong to $Z(\Omega, 2)$ when $a$ and $b$ are distinct elements of $\Omega$. 
Let ${\rm Pol}(\Omega)$ denote $\bigcup_{r\geq 0}{\rm Pol}(\Omega,r)$. An element of ${\rm Pol}(\Omega)$ is called a {\it polynomial} (function) on $\Omega$. 
An element of ${\rm Pol}(\Omega,r)\setminus {\rm Pol}(\Omega,r-1)$ is called 
a {\it polynomial of degree $r$} on $\Omega$. 
The space ${\rm Pol}(\Omega,r)$ depends on the separation function $\rho$. When we would like to emphasize the dependence on the separation function $\rho$, we denote it as ${\rm Pol}_{\rho}(\Omega,r)$. 

The concept of polynomial spaces is defined by Godsil \cite{G1,G2}. 
\begin{definition}[Polynomial space]
Let $\rho$ be a separation function on a set $\Omega$. Let $\langle f,g \rangle$ be an inner product on ${\rm Pol}(\Omega)$. 
We call $(\Omega, \rho)$ a {\it polynomial space} if the following are satisfied. 
\begin{enumerate}
    \item For any $x, y \in \Omega$, $\rho(x,y)=\rho(y,x)$. 
    \item The dimension of ${\rm Pol}(\Omega,r)$ is finite for any $r$. 
    \item For any $f,g \in {\rm Pol}(\Omega)$, $\la f, g \ra=\la 1, fg \ra$.
    \item For any $f \in {\rm Pol}(\Omega)$, if $f(x)\geq 0$ for each $x \in \Omega$, then $\la 1,f\ra \geq 0$ holds. Moreover, $\la 1,f\ra = 0$ if and only if $f=0$. 
\end{enumerate}
\end{definition}
For the innper product $\langle \cdot, \cdot \rangle$, we may suppose $\langle 1, 1 \rangle=1$. We usually use the following inner product on ${\rm Pol}(\Omega)$ for a finite underlying set $\Omega$: 
\[
\la f, g \ra= \sum_{x \in \Omega}  f(x)g(x) \mu(x)
\]
for $f,g \in {\rm Pol}(\Omega)$, where $\mu(x)$ is
a probability function on $\Omega$. 
We can regard $\Omega$ as a weighted set with weight function $\mu$. If we do not suppose $\Omega$ is weighted, we always use $\mu(x)\equiv 1/|\Omega|$. 
The concept of designs in a polynomial space, to be defined later, is influenced by the choice of an inner product $\langle \cdot , \cdot \rangle$.

There exists an equivalence class of separation functions, all of which define the same space of polynomials. 
Two separation functions $\rho$ and $\sigma$ on $\Omega$ are {\it affinely equivalent} if
there exist $a \in \R\setminus \{0\}$ and $b\in \R$ such that for any $x,y \in \Omega$, $\rho(x,y)=a \sigma(x,y)+b$. 
If $\rho$ and $\sigma$ are affinely equivalent, 
then ${\rm Pol}_\rho (\Omega, r) = {\rm Pol}_\sigma (\Omega,r)$ for any $r\geq 0$.   
The {\it dimension} of a polynomial space is $\dim {\rm Pol}(\Omega, 1)-1$.  

In the spherical polynomial space defined in Definition \ref{def:spherical_poly}, any function in the space ${\rm Pol}(\Omega,r)$ can be represented in a simple form in the sense of Theorem \ref{thm:pol}. 
This concept is generalized to the Euclidean polynomial space while preserving the property described in Theorem~\ref{thm:pol}.

\begin{definition}[Euclidean and spherical polynomial space] \label{def:spherical_poly}
A polynomial space $(\Omega,\rho)$ is {\it Euclidean} (resp.\ {\it spherical}) if there exist an injection $\tau$ from $\Omega$ to the Euclidean space $\R^{d}$ (resp.\ the unit sphere $\mathbb{S}^{d-1}$) for some $d$ and a separation function $\rho'$ affinely equivalent to $\rho$, such that for any $x,y \in \Omega$, \[
\rho'(x,y)=(\tau(x),\tau(y)),\]
where $(\cdot , \cdot)$ is the usual inner product of $\R^d$. 
\end{definition}

\begin{example}
Johnson schemes and Hamming schemes are spherical polynomial spaces, where the usual distance functions can be taken as separation functions. 
More generally, a symmetric association scheme is regarded as a spherical polynomial space if the embedding determined by a primitive idempotent $E_i$ is injective. Since $E_i$ is a positive semidefinite matrix indexed by the vertex set, it serves as the Gram matrix of an embedding into $\mathbb{R}^{\operatorname{rank}(E_i)}$.
In this case, a separation function can be taken as $\rho(x,y) = (E_i)_{xy}$. 
For Johnson schemes and Hamming schemes, the usual distance functions are affinely equivalent to $\rho(x,y) = (E_i)_{xy}$.
\end{example}

\begin{example} \label{ex:2.4}
Let $\Omega$ be the power set of a $d$-point set, and define $\rho(a,b) = |a \cap b|$ for $a,b \in \Omega$. 
Then $(\Omega, \rho)$ is a Euclidean polynomial space, where the injection $\tau : \Omega \to \R^d$ is given by taking the characteristic vector of each subset.
\end{example}

 For a Euclidean polynomial space $(\Omega,\rho)$,  any polynomial of degree $r$ on $\Omega$ is expressed by an element of $Z(\Omega,r)$. 
 
\begin{theorem}[{\cite[Theorem 4.1 in Section 15.4 and the comment below its proof]{G1}, \cite{G2}}] \label{thm:Z=P}
A Euclidean polynomial space $(\Omega, \rho)$ satisfies that $Z(\Omega,r)={\rm Pol}(\Omega,r)$ for any $r \geq 0$. 
\end{theorem}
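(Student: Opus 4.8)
The plan is to prove the nontrivial inclusion ${\rm Pol}(\Omega,r)\subseteq Z(\Omega,r)$ by realizing both spaces concretely as spaces of genuine polynomial functions on the image $S:=\tau(\Omega)\subseteq\R^d$ and comparing them. First I would use the affine equivalence recalled above to replace $\rho$ by an affinely equivalent $\rho'$, so that without loss of generality $\rho(x,y)=(\tau(x),\tau(y))$; replacing $\R^d$ by the linear span of $S$ if necessary, I may also assume $S$ spans $\R^d$. Two inclusions are then routine. On one hand $Z(\Omega,r)\subseteq{\rm Pol}(\Omega,r)$, since $f\circ\rho_a=\sum_{k\le r}c_k\rho_a^k$ with $\rho_a^k\in{\rm Pol}(\Omega,k)$. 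On the other hand, every element of ${\rm Pol}(\Omega,1)=Z(\Omega,1)$ has the form $c_0+\sum_a c_a\rho_a$, hence is an affine function of the coordinate functions $\xi_1,\dots,\xi_d$ (where $\xi_i(\cdot)=\tau(\cdot)_i$); taking products of at most $r$ such functions shows ${\rm Pol}(\Omega,r)\subseteq{\rm P}_r(S)$, the space of restrictions to $S$ of real polynomials of degree at most $r$. Thus it suffices to prove the reverse inclusion ${\rm P}_r(S)\subseteq Z(\Omega,r)$, which then forces equality throughout.

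Next I would reduce this to a single homogeneous statement. Writing a polynomial as the sum of its homogeneous parts gives ${\rm P}_r(S)=\sum_{m=0}^r V_m$, where $V_m$ is the span of the restrictions to $S$ of the degree-$m$ monomials $\xi^\alpha$, $|\alpha|=m$. Since $\rho_a^m\in Z(\Omega,m)$, it is enough to establish the key identity $V_m=U_m$ for every $m$, where $U_m:={\rm Span}_\R\{\rho_a^m\mid a\in\Omega\}$; granting this, $V_m\subseteq Z(\Omega,m)\subseteq Z(\Omega,r)$ for all $m\le r$, whence ${\rm P}_r(S)\subseteq Z(\Omega,r)$ as desired. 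Here axiom $(2)$ of a polynomial space guarantees that all of these spaces are finite-dimensional, which is what lets the argument run even when $\Omega$ is infinite.

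The core of the proof is the identity $V_m=U_m$, and this is where the main difficulty lies. The inclusion $U_m\subseteq V_m$ is clear because $\rho_a^m=(\tau(a),\cdot)^m$ is a homogeneous polynomial of degree $m$ in the coordinates. The subtle point is the reverse inclusion: $Z(\Omega,m)$ only supplies $m$-th powers $\rho_a^m$ of the \emph{zonal} linear forms indexed by the points of $\Omega$, and since $\Omega$ carries no linear structure, one cannot directly apply a polarization identity to recover an arbitrary monomial $\xi^\alpha|_S$ --- the naive polarization produces powers $(w,\cdot)^m$ for vectors $w$ that need not lie in $S$. I would resolve this with a Gram-matrix computation. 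Let $G=[(\tau(a),\tau(b))]_{a,b\in\Omega}$ and let $G^{\circ m}=[(\tau(a),\tau(b))^m]$ denote its entrywise $m$-th power; its $a$-th column is exactly the function $\rho_a^m$, so $U_m$ is the column space of $G^{\circ m}$. The multinomial expansion $(\tau(a),\tau(b))^m=\sum_{|\alpha|=m}\binom{m}{\alpha}\tau(a)^\alpha\tau(b)^\alpha$ factors this as $G^{\circ m}=B_m D_m B_m^{\top}$, where $B_m=[\tau(a)^\alpha]_{a,\,|\alpha|=m}$ is the Veronese matrix whose column space is precisely $V_m$, and $D_m={\rm diag}(\binom{m}{\alpha})$ is positive definite. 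The elementary lemma that $\ker(BDB^{\top})=\ker(B^{\top})$ whenever $D\succ0$, so that the column spaces of $BDB^{\top}$ and $B$ coincide, then yields $U_m={\rm colspace}(G^{\circ m})={\rm colspace}(B_m)=V_m$. It is exactly the positive definiteness of the multinomial weights $D_m$ that forces the powers of the zonal forms to span the full homogeneous space, overcoming the lack of linear structure on $\Omega$; assembling the pieces gives $Z(\Omega,r)={\rm Pol}(\Omega,r)={\rm P}_r(S)$.
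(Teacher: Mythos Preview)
The paper does not supply a proof of this statement; it is quoted as a known result from Godsil~\cite{G1,G2}. Your argument is correct and is essentially the standard one (and, as far as one can tell, Godsil's): the factorization $G^{\circ m}=B_mD_mB_m^{\top}$ with positive-definite multinomial weights $D_m$ is exactly how one shows that the $m$-th powers of the zonal linear forms $\langle\tau(a),\cdot\rangle$ span the same space on $\tau(\Omega)$ as the degree-$m$ monomials do.

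One point worth tightening: your kernel/column-space lemma is a finite-matrix statement, whereas $B_m$ has $|\Omega|$ rows, and for infinite $\Omega$ the passage from $\ker(BDB^\top)=\ker(B^\top)$ to equality of column spaces is not automatic. The remedy is not axiom~(2) but simply to work in the finite-dimensional target $\R^{N_m}$ (where $N_m=\binom{d+m-1}{m}$): with $W=\mathrm{Span}\{(\tau(a)^{\alpha})_{|\alpha|=m}:a\in\Omega\}\subseteq\R^{N_m}$ one has $\ker B_m=W^{\perp}$, $V_m=B_m\R^{N_m}$, and $U_m=B_m(D_mW)$; then $D_m\succ 0$ forces $D_mW\cap W^{\perp}=0$ (if $D_mw\perp W$ then $\langle D_mw,w\rangle=0$), hence $D_mW+W^{\perp}=\R^{N_m}$ and $U_m=V_m$. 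This is precisely your computation, phrased so that no infinite Gram matrix appears.
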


The following theorem was proved only for spherical polynomial spaces in \cite{G1}. 
This theorem is generalized for Euclidean polynomial spaces with the same proof. 

\begin{theorem}[{\cite[Theorem 4.3 in Section 15.4]{G1}, \cite{G2}}] \label{thm:pol}
Let $(\Omega,\rho)$ be a Euclidean polynomial space of dimension $n$. 
Let $\tau$ be an injection from $\Omega$ to $\R^n$ such that for any $x,y \in \Omega$, $\rho(x,y)=(\tau(x),\tau(y))$. 
Then it holds that for each $r\geq 0$, 
\[
{\rm Pol}(\Omega,r)=\{f \circ \tau :\Omega \rightarrow \R \mid f \in \R_r[x_1,\ldots, x_n]\}. 
\]
\end{theorem}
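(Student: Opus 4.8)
The plan is to argue by induction on $r$, using the recursive definition of ${\rm Pol}(\Omega,r)$ together with a sharp description of the degree-one part in terms of the coordinate functions of $\tau$. Write $\tau=(\tau_1,\ldots,\tau_n)$, so that each $\tau_i:\Omega\to\R$ is a function on $\Omega$, and for a multi-index $\alpha$ set $\tau^\alpha=\prod_{i=1}^n \tau_i^{\alpha_i}$. Since every $f\in\R_r[x_1,\ldots,x_n]$ is a linear combination of monomials $x^\alpha$ with $|\alpha|\le r$, the right-hand side of the asserted identity is exactly ${\rm Span}_\R\{\tau^\alpha \mid |\alpha|\le r\}$, which I denote by $P_r$; the goal is then to prove ${\rm Pol}(\Omega,r)=P_r$.

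First I would pin down the case $r=1$, which is the crux. By definition ${\rm Pol}(\Omega,1)=Z(\Omega,1)$, and a degree-$\le 1$ zonal polynomial $f\circ\rho_a$ with $f(x)=\alpha x+\beta$ sends $\xi\mapsto \beta+\alpha(\tau(a),\tau(\xi))=\beta+\alpha\sum_{i=1}^n \tau_i(a)\,\tau_i(\xi)$, i.e.\ it equals $\beta\cdot 1+\alpha\sum_{i=1}^n \tau_i(a)\,\tau_i$ as a function on $\Omega$. Hence every generator of $Z(\Omega,1)$ lies in ${\rm Span}_\R\{1,\tau_1,\ldots,\tau_n\}=P_1$, giving ${\rm Pol}(\Omega,1)\subseteq P_1$. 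Since $(\Omega,\rho)$ has dimension $n$, we have $\dim{\rm Pol}(\Omega,1)=n+1$, while $\dim P_1\le n+1$; the inclusion of an $(n+1)$-dimensional space into one of dimension at most $n+1$ forces ${\rm Pol}(\Omega,1)=P_1$ and, in particular, $\tau_i\in{\rm Pol}(\Omega,1)$ for every $i$.

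With the base cases $r=0$ (constants) and $r=1$ in hand, I would run the induction. For the inclusion ${\rm Pol}(\Omega,r+1)\subseteq P_{r+1}$: any generator $gh$ with $g\in{\rm Pol}(\Omega,r)=P_r$ and $h\in{\rm Pol}(\Omega,1)=P_1$ is, by the inductive hypothesis, of the form $(p\circ\tau)(\ell\circ\tau)=(p\ell)\circ\tau$ with $\deg(p\ell)\le r+1$, so $gh\in P_{r+1}$, and taking spans preserves this. For the reverse inclusion, each monomial function $\tau^\alpha$ with $1\le|\alpha|\le r+1$ factors as $\tau^\beta\cdot\tau_i$ with $|\beta|\le r$; by induction $\tau^\beta\in{\rm Pol}(\Omega,r)$ and $\tau_i\in{\rm Pol}(\Omega,1)$, whence $\tau^\alpha\in{\rm Pol}(\Omega,r+1)$, while constants lie in ${\rm Pol}(\Omega,r+1)$ trivially; taking spans gives $P_{r+1}\subseteq{\rm Pol}(\Omega,r+1)$.

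The only genuinely delicate point is upgrading the easy inclusion ${\rm Pol}(\Omega,1)\subseteq P_1$ to an equality: this is precisely where the hypothesis that the embedding dimension equals the polynomial-space dimension $n$ is essential, entering through the count $\dim{\rm Pol}(\Omega,1)=n+1$. Once $r=1$ is settled, the remaining steps are a formal induction driven by the multiplicative definition of ${\rm Pol}(\Omega,r+1)$ and the factorization of monomials, and this argument carries over verbatim from the spherical case treated in \cite{G1}.
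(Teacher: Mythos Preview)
Your proof is correct and follows essentially the same approach as the paper (and the cited source \cite{G1}). The paper does not supply an independent proof of this statement, but its proof of the generalization Theorem~\ref{thm:key} proceeds in exactly your way: reduce by induction to $r=1$, and there use the inclusion ${\rm Pol}(\Omega,1)\subseteq{\rm Span}_\R\{1,\tau_1,\ldots,\tau_n\}$ together with the dimension count $\dim{\rm Pol}(\Omega,1)=n+1$ to force equality (the case $m=n$ of Theorem~\ref{thm:key} is dismissed there as ``trivial'' precisely because this dimension match makes the reverse inclusion automatic).
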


\begin{remark}
For $Q$-polynomial association schemes with respect to the ordering $E_0,E_1,\ldots, E_d$, the maximal common eigenspaces $V_i$ are identified with the polynomials of degree $i$ on $\Omega$, namely 
\[
{\rm Pol}(\Omega,r)= \bigoplus_{i=0}^r V_i,  
\]
    where $v =(v_x)_{x \in X} \in \R^X$ is interpreted as the function $x \in X \mapsto v_x \in \R$. 
    In particular, the dimension of a $Q$-polynomial scheme as polynomial space is $m_1={\rm Rank} (E_1)$.  
\end{remark}

For a polynomial space $(\Omega,\rho)$, an injection $\tau: \Omega \rightarrow \R^{n}$ is said to be {\it affinely compatible} with $\rho$ if there exist $a\in \R \setminus \{0\}$ and $b\in \R$ such that for any $x,y \in \Omega$, $\rho(x,y)=a(\tau(x),\tau(y))+b$. 
If there exists an affinely compatible injection $\tau$, then separation functions $\rho$ and $\sigma(x,y)=(1/a)\rho(x,y) -b/a=(\tau(x),\tau(y))$ are affinely equivalent, and hence ${\rm Pol}_\rho(\Omega,r)={\rm Pol}_\sigma(\Omega,r)$. 
From Theorem \ref{thm:pol}, the following corollary follows. 

\begin{corollary} \label{rem:com}
 Let $(\Omega,\rho)$ be a Euclidean polynomial space of dimension $n$. Suppose there exists an injection $\tau: \Omega \rightarrow \R^n$ that is affinely compatible with $\rho$. Then it holds that for each $r\geq 0$, 
\[
{\rm Pol}(\Omega,r)=\{f \circ \tau :\Omega \rightarrow \R \mid f \in \R_r[x_1,\ldots, x_n]\}. 
\]
\end{corollary}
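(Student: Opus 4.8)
The plan is to reduce the statement to Theorem~\ref{thm:pol} by passing from the separation function $\rho$ to the affinely equivalent one $\sigma(x,y)=(\tau(x),\tau(y))$, and then to check that the pair $(\Omega,\sigma)$ satisfies the hypotheses of Theorem~\ref{thm:pol} verbatim. Indeed, this is essentially the content of the remark recorded just before the statement of the corollary.

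First I would set $\sigma(x,y)=(\tau(x),\tau(y))$. Since $\tau$ is affinely compatible with $\rho$, there are $a\in\R\setminus\{0\}$ and $b\in\R$ with $\rho(x,y)=a(\tau(x),\tau(y))+b=a\sigma(x,y)+b$ for all $x,y\in\Omega$, equivalently $\sigma=(1/a)\rho-b/a$. Thus $\rho$ and $\sigma$ are affinely equivalent, so by the property of affinely equivalent separation functions recalled above we have ${\rm Pol}_\rho(\Omega,r)={\rm Pol}_\sigma(\Omega,r)$ for every $r\ge 0$. Because the underlying function spaces coincide and the inner product $\la\cdot,\cdot\ra$ on ${\rm Pol}(\Omega)$ is unchanged, the four axioms of a polynomial space hold for $(\Omega,\sigma)$ exactly as they do for $(\Omega,\rho)$, and the dimension $\dim{\rm Pol}_\sigma(\Omega,1)-1=\dim{\rm Pol}_\rho(\Omega,1)-1=n$ is preserved.

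Next I would note that $(\Omega,\sigma)$ is Euclidean with witness injection $\tau$: since $\sigma$ itself satisfies $\sigma(x,y)=(\tau(x),\tau(y))$, one may take $\rho'=\sigma$ in Definition~\ref{def:spherical_poly}. Hence $(\Omega,\sigma)$ is a Euclidean polynomial space of dimension $n$ equipped with an injection $\tau:\Omega\to\R^n$ for which $\sigma(x,y)=(\tau(x),\tau(y))$, which is precisely the setting of Theorem~\ref{thm:pol}. Applying that theorem to $(\Omega,\sigma)$ gives ${\rm Pol}_\sigma(\Omega,r)=\{f\circ\tau:\Omega\to\R \mid f\in\R_r[x_1,\ldots,x_n]\}$ for each $r\ge 0$, and combining this with ${\rm Pol}_\rho(\Omega,r)={\rm Pol}_\sigma(\Omega,r)$ yields the claimed identity.

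The only point needing care — and what I would regard as the main, if modest, obstacle — is the bookkeeping that ensures $(\Omega,\sigma)$ genuinely qualifies as a Euclidean polynomial space of the \emph{same} dimension $n$, so that Theorem~\ref{thm:pol} applies without modification. Once affine equivalence is invoked this is immediate, since passing to an affinely equivalent separation function alters neither the spaces ${\rm Pol}(\Omega,r)$, nor the inner product, nor the value $\dim{\rm Pol}(\Omega,1)$. No genuinely new computation is required beyond the substitution $\sigma=(1/a)\rho-b/a$.
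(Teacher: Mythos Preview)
Your proof is correct and follows exactly the approach indicated in the paper: the paper records just before the corollary that $\sigma(x,y)=(1/a)\rho(x,y)-b/a=(\tau(x),\tau(y))$ is affinely equivalent to $\rho$, whence ${\rm Pol}_\rho(\Omega,r)={\rm Pol}_\sigma(\Omega,r)$, and then states that the corollary follows from Theorem~\ref{thm:pol}. Your write-up simply makes explicit the routine verification that $(\Omega,\sigma)$ is again a Euclidean polynomial space of the same dimension, which is precisely the bookkeeping the paper leaves implicit.
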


Designs in polynomial spaces, which will play an essential role, are defined as follows. 
\begin{definition}
[Weighted $t$-design]
\label{def:t-design}
Let $(\Omega,\rho)$ be a polynomial space. 
 A finite subset $X$ of $\Omega$ is called a {\it weighted $t$-design} if 
 there exists a positive weight function $w:X\rightarrow \R_{>0}$ such that 
 for any $f \in {\rm Pol}(\Omega,t)$,  
 \begin{equation} \label{eq:def_design}
 \la 1, f\ra = \sum_{x\in X} w(x) f(x).    
 \end{equation}
 If $w$ is constant ($w\equiv 1/|X|$ under the assumption $\langle 1,1 \rangle=1$), then a weighted $t$-design is called a {\it $t$-design}. 
\end{definition}
For a finite underlying set $\Omega$, equation \eqref{eq:def_design} forms 
\[
\sum_{x \in \Omega}  f(x)\mu(x) =\sum_{x \in X} w(x)f(x),  
\]
where $\sum_{x \in \Omega}\mu(x)= \sum_{x \in X} w(x)=1$. 

\begin{example}[Spherical design]
Let $\Omega$ be the $(d-1)$-dimensional unit sphere
$\mathbb{S}^{d-1}$, 
and separation function $\rho$ be defined as the usual inner product of $\R^d$. Then, $(\Omega,\rho)$ is a spherical polynomial space and ${\rm Pol}(\Omega,r)$ is the space of polynomial functions on
$\mathbb{S}^{d-1}$
 of total degree at most $r$.  
If the inner product of ${\rm Pol}(\Omega)$ is defined as the usual normalized inner product on the sphere
\[
\langle f,g \rangle =
\frac{1}{|\mathbb{S}^{d-1}|}\int_{\mathbb{S}^{d-1}} f(x) g(x)\, d\nu(x), 
\]
where $\nu$ is the uniform measure on $\mathbb{S}^{d-1}$, then the $t$-designs in the sense of Defnition \ref{def:t-design} coincide with spherical $t$-designs \cite{DGS77}. 
\end{example} 

\begin{example}[Orthogonal array \cite{HSSbook}]
Let $S$ be a finite set of $s$ symbols. 
An $N\times k$ array $A$ with entries from $S$ is called an {\it orthogonal array with $s$ levels and strength $t$} 
if there exists $\lambda \in \mathbb{Z}$ such that
every $N \times t$ subarray of $A$ contains each $t$-tuple from $S$ exactly $\lambda$ times as a row. 
Such an orthogonal array is denoted by $\mathrm{OA}(N,k,s,t)$. 
A row in an orthogonal array is called a {\it run}. 
The set of runs is a subset of $\Omega=S^k$. 
   The set of runs in an orthogonal array of strength $t$ is identified with a $t$-design in Hamming schemes $H(k,s)$, where the probability function $\mu(x)$ is constant~\cite{G1,G2}. 
\end{example}

\begin{example}[Combinatorial $t$-design \cite{BJLbook}]
Let $\Omega$ be the set of $k$-subsets of a $v$-point set $V$, and let $\mathcal{B}$ be a subset of $\Omega$, whose elements are called \emph{blocks}. 
The pair $(V, \mathcal{B})$ is called a \emph{$t$-$(v,k,\lambda)$ design} (or a \emph{combinatorial $t$-design}) if every $t$-subset of $V$ is contained in exactly $\lambda$ blocks. 
The block set of a combinatorial $t$-design is identified with a $t$-design in the Johnson scheme $J(v,k)$, where the probability function $\mu(x)$ is constant \cite{G1,G2}. 
\end{example}

\begin{example}[Regular $t$-wise balanced design \cite{FKJ89}] \label{ex:t-wise}
Let $\Omega$ be the power set of a $v$-point set $V$, and let $\mathcal{B}$ be a subset of $\Omega$, whose elements are called \emph{blocks}.  
The block set $\mathcal{B}$ is partitioned by size as $\mathcal{B} = \bigcup_{i=1}^\ell \mathcal{B}_i$. 
Let $K = \{k_1, \ldots, k_\ell\}$ be the set of block sizes, where $k_i = |x|$ for all $x \in \mathcal{B}_i$. 
The pair $(V, \mathcal{B})$ is called a \emph{regular $t$-$(v,K,\lambda)$ design} (or a \emph{regular $t$-wise balanced design}) if for each integer $t'$ with $0 \le t' \le t$, every $t'$-subset of $V$ is contained in exactly $\lambda_{t'}$ blocks, where $\lambda = \lambda_t$. 
The block set of a regular $t$-$(v,K,\lambda)$ design is identified with a $t$-design in $(\Omega, \rho)$ (Example~\ref{ex:2.4}) with probability function
\[
\mu(x) = \begin{cases}
\frac{|\mathcal{B}_i|}{|\mathcal{B}| \binom{v}{k_i}} &\text{if $|x|=k_i$}, \\
0 &\text{otherwise}
\end{cases}
\]
for $x \in \Omega$ \cite[Proposition~3.1]{NS13}.
\end{example}

Corollary~\ref{rem:com} implies the following theorem. 

\begin{theorem}\label{thm:1}
 Let $(\Omega,\rho)$ be a Euclidean polynomial space of dimension $n$. 
Let $X \subset \Omega$ be a weighted $t$-design in $\Omega$ with weight function $w$. 
Suppose there exists an injection $\tau: \Omega \rightarrow \R^{n}$ that is affinely compatible with $\rho$. Then it holds that 
\[
\la 1,f\circ \tau \ra 
=  \sum_{x \in X} w(x) f\circ \tau(x)
\]   
for any $f \in \R_t[x_1,\ldots, x_{n}]$. 
In particular, if $\Omega$ is a finite weighted set with probability function $\mu$, 
then
\[
  \sum_{x \in \Omega} f\circ \tau(x)  \mu(x)
=  \sum_{x \in X} w(x)f\circ \tau(x), 
\]  
and hence
\[
 \sum_{y \in \tau(\Omega)}  f(y)\mu(\tau^{-1}(y))
=  \sum_{y \in \tau(X)} w(\tau^{-1}(y))f(y)
\]
for any $f \in \R_t[x_1,\ldots, x_{n}]$. 
\end{theorem}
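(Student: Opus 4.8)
The plan is to reduce the statement directly to the defining property of a weighted $t$-design together with Corollary~\ref{rem:com}. The essential observation is that, for a Euclidean polynomial space equipped with an affinely compatible injection $\tau:\Omega\to\R^n$, every function obtained by composing a polynomial $f\in\R_t[x_1,\ldots,x_n]$ with $\tau$ lies in ${\rm Pol}(\Omega,t)$. First I would fix an arbitrary $f\in\R_t[x_1,\ldots,x_n]$ and appeal to Corollary~\ref{rem:com}, whose hypotheses (a Euclidean polynomial space of dimension $n$ together with an affinely compatible injection into $\R^n$) are exactly those assumed here; this yields $f\circ\tau\in{\rm Pol}(\Omega,t)$.

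With this membership in hand, I would apply Definition~\ref{def:t-design} to the polynomial $g=f\circ\tau$. Since $X$ is a weighted $t$-design with weight function $w$, equation~\eqref{eq:def_design} gives
\[
\la 1, f\circ\tau\ra = \sum_{x\in X} w(x)\,(f\circ\tau)(x),
\]
which is precisely the first asserted identity. Because $f$ was an arbitrary polynomial of degree at most $t$, this holds for all such $f$.

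For the finite weighted case, I would substitute the explicit inner product $\la h_1,h_2\ra=\sum_{x\in\Omega}h_1(x)h_2(x)\mu(x)$ with $h_1\equiv 1$ and $h_2=f\circ\tau$, turning the left-hand side into $\sum_{x\in\Omega}(f\circ\tau)(x)\,\mu(x)$. The final displayed identity is then obtained by reindexing both sums through the injection $\tau$: writing $y=\tau(x)$, so that $x=\tau^{-1}(y)$, the sum over $x\in\Omega$ becomes a sum over $y\in\tau(\Omega)$ with weight $\mu(\tau^{-1}(y))$, and the sum over $x\in X$ becomes a sum over $y\in\tau(X)$ with weight $w(\tau^{-1}(y))$; injectivity of $\tau$ guarantees this reindexing is a bijection and hence term-preserving.

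The argument is short because the real work is packaged in Corollary~\ref{rem:com}. The only point requiring attention is that we use affine compatibility of $\tau$ rather than the strict equality $\rho(x,y)=(\tau(x),\tau(y))$ appearing in the definition of a Euclidean polynomial space; this is harmless precisely because affinely equivalent separation functions generate the same polynomial spaces, which is exactly what Corollary~\ref{rem:com} records. Thus no genuine obstacle arises beyond verifying that the hypotheses of that corollary are in force.
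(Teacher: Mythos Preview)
Your proposal is correct and matches the paper's approach exactly: the paper simply states that Corollary~\ref{rem:com} implies the theorem, and your argument spells out precisely this implication by noting $f\circ\tau\in{\rm Pol}(\Omega,t)$ and then invoking the defining equation of a weighted $t$-design.
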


\section{Generalization of Victoir's method} \label{sec:generalization_Victoir}

Victoir \cite{V04} proposed a method for reducing the size of a weighted $t$-design $Z$ in $\mathbb{R}^d$ that contains a specific subset $X$, by replacing $X$ with another set $Y$ associated with a combinatorial structure, while preserving the $t$-design property. 
He used combinatorial designs, orthogonal arrays with 2 levels, and $t$-homogeneous sets as sources of such replacement sets $Y$, so that the resulting weighted design $(Z \setminus X) \cup Y$ has fewer points.
In this section, we generalize his method by using designs in Euclidean polynomial spaces $\Omega$.
Here, $X$ is regarded as $\tau(\Omega)$ for some affinely compatible injection $\tau : \Omega \rightarrow \mathbb{R}^d$ with separation function $\rho$,  and $Y$ is taken as the image of a design in $\Omega$.
A different type of generalization is required for each of the three combinatorial structures.

\subsection{Orthogonal array and Hamming scheme with 2 levels}

In Victoir's method, the set $X = \{\pm 1\}^d$ is replaced by a set $Y$ associated with an orthogonal array $\mathrm{OA}(N,d,2,t)$ in order to reduce the size of a weighted $t$-design in $\mathbb{R}^d$. 
The Hamming scheme $\Omega = H(d,2)$, equipped with the Hamming distance $\rho$, forms a spherical polynomial space, where an orthogonal array is regarded as a $t$-design in the sense of Definition~\ref{def:t-design}. 
The underlying set $\Omega$ is $\{0,1\}^d$. 
The natural bijection $\tau: \{0,1\}^d \to \{-1,1\}^d\subset \R^d$, where $0$ is mapped to $-1$ and $1$ to $1$, is affinely compatible with $\rho$.
Since the dimension of $(\Omega, \rho)$ is $d$, the image $\tau(\Omega) = X$ can be replaced by the image of an orthogonal array while preserving the $t$-design property, as guaranteed by Theorem~\ref{thm:1}.
Thus, Victoir's method for orthogonal arrays with 2 levels follows directly from Theorem~\ref{thm:1}.

\subsection{Combinatorial design and Johnson scheme}
\label{subsect:Johnson}
Let $v_k(\alpha, \beta) \subset \mathbb{R}^d$ be the set consisting of all vectors whose $k$ coordinates are equal to $\alpha$, and the remaining $d - k$ coordinates are equal to $\beta$, where $\alpha \neq \beta$.
In Victoir's method, the set $X=v_k(\alpha, \beta)$ is replaced by a set $Y$ associated with a combinatorial $t$-design in order to reduce the size of a weighted $t$-design.
The Johnson scheme $\Omega = J(d,k)$, equipped with the usual distance
$\rho(x,y) = 2(k - |x \cap y|)$, 
forms a spherical polynomial space, where a combinatorial design is regarded as a $t$-design in the sense of Definition~\ref{def:t-design}.
The underlying set $\Omega$ is the set of all $k$-subsets of $\{1,2,\ldots,d\}$.
The bijection $\tau: \Omega \to v_k(\alpha, \beta)\subset \R^{d}$, defined by assigning to each $T \in \Omega$ the vector $(x_1, \ldots, x_d)$ with $x_i = \alpha$ if $i \in T$ and $x_i = \beta$ if $i \notin T$, is affinely compatible with $\rho$. 
In Victoir's method, $\tau(\Omega)=X$ is replaced by the image of a combinatorial design while preserving the $t$-design property. 

The dimension of $(\Omega, \rho)$ as a polynomial space is $d - 1$, which is smaller than the dimension of the range of $\tau$.
Theorem~\ref{thm:1} applies only to the case where the dimension of the range of $\tau$ coincides with that of the polynomial space $(\Omega, \rho)$.
In order to treat more general cases, we need to extend Corollary~\ref{rem:com} to the setting of any injection $\Omega \to \mathbb{R}^m$, where $m$ is at least the dimension of the polynomial space.

\begin{theorem} \label{thm:key}
Let $(\Omega,\rho)$ be a Euclidean polynomial space of dimension $n$. 
Suppose there exists an injection $\tau :\Omega \rightarrow \R^{m}$ that is affinely compatible with $\rho$, and $m\geq n$. 
Then it holds that for each $r\geq 0$, 
\[
{\rm Pol}(\Omega,r)=\{f \circ \tau :\Omega \rightarrow \R \mid f \in \R_r[x_1,\ldots, x_m]\}. 
\]
\end{theorem}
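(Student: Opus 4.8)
\emph{Proof proposal.} The plan is to follow the strategy of Theorem~\ref{thm:pol}, reducing everything to the degree-one identity and then bootstrapping via the multiplicative definition of the spaces ${\rm Pol}(\Omega,r)$. Since $\tau$ is affinely compatible with $\rho$, there are $c \in \R\setminus\{0\}$ and $b \in \R$ with $\rho(x,y) = c(\tau(x),\tau(y)) + b$. Setting $\sigma(x,y) = (\tau(x),\tau(y))$ yields a separation function affinely equivalent to $\rho$, so ${\rm Pol}_\rho(\Omega,r) = {\rm Pol}_\sigma(\Omega,r)$ for all $r$, and I may work with $\sigma$ throughout. With this choice the zonal polynomials of degree at most one are exactly $1$ and $\sigma_a(\xi) = (\tau(a),\tau(\xi))$, so that ${\rm Pol}(\Omega,1) = Z_\sigma(\Omega,1) = {\rm Span}_{\R}\{1, \sigma_a \mid a \in \Omega\}$.

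The crux is to prove the degree-one case
\[
{\rm Pol}(\Omega,1) = \{f\circ\tau \mid f \in \R_1[x_1,\ldots,x_m]\} = {\rm Span}_{\R}\{1, \tau(\cdot)_1, \ldots, \tau(\cdot)_m\},
\]
where $\tau(\cdot)_i : \Omega \to \R$ denotes the $i$-th coordinate of $\tau$. The inclusion $\subseteq$ is immediate, since each $\sigma_a = \sum_{i=1}^m \tau(a)_i\, \tau(\cdot)_i$ is an $\R$-linear combination of the coordinate functions. For the reverse inclusion I would introduce the linear map $\Phi : \R^m \to \R^\Omega$ defined by $\Phi(v)(\xi) = (v,\tau(\xi))$, so that $\sigma_a = \Phi(\tau(a))$ and $\tau(\cdot)_i = \Phi(e_i)$ for the standard basis $e_1,\ldots,e_m$. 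Writing $V = {\rm Span}_{\R}\{\tau(a)\mid a\in\Omega\} \subseteq \R^m$, one checks that $\ker\Phi = V^\perp$, the orthogonal complement of $V$ with respect to $(\cdot,\cdot)$. Decomposing $e_i = v_i + w_i$ with $v_i \in V$ and $w_i \in V^\perp = \ker\Phi$ (using $\R^m = V \oplus V^\perp$), I obtain $\tau(\cdot)_i = \Phi(e_i) = \Phi(v_i) \in \Phi(V) = {\rm Span}_{\R}\{\sigma_a\mid a\in\Omega\} \subseteq {\rm Pol}(\Omega,1)$. This gives $\supseteq$ and hence the degree-one identity.

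With the base case in hand, the general statement follows by induction on $r$, exactly as in Theorem~\ref{thm:pol}: since $\R_{r+1}[x_1,\ldots,x_m] = {\rm Span}_{\R}\{gh \mid g \in \R_r[x], h \in \R_1[x]\}$ and ${\rm Pol}(\Omega,r+1) = {\rm Span}_{\R}\{g'h' \mid g' \in {\rm Pol}(\Omega,r),\ h' \in {\rm Pol}(\Omega,1)\}$, the identity $(g\circ\tau)(h\circ\tau) = (gh)\circ\tau$ lets the inductive hypothesis for $r$ and the degree-one case combine to yield the case $r+1$, the case $r=0$ being trivial. The step I expect to be the only real obstacle is the reverse inclusion in the degree-one identity, namely that each coordinate function $\tau(\cdot)_i$ is a genuine zonal polynomial of degree at most one. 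This is precisely where $m>n$ matters: when $m=n$ as in Theorem~\ref{thm:pol}, a dimension count forces $V=\R^m$ and the issue is vacuous, whereas for $m>n$ one necessarily has $V\subsetneq\R^m$, and it is the orthogonal splitting $e_i = v_i + w_i$ that absorbs the ``excess'' directions $w_i \in V^\perp$ on which $\Phi$ vanishes.
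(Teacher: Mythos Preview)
Your proof is correct and follows essentially the same approach as the paper's: both reduce to the degree-one case by induction, and for the key reverse inclusion both orthogonally decompose $\R^m$ as ${\rm Span}_\R\tau(\Omega)\oplus({\rm Span}_\R\tau(\Omega))^\perp$, observing that vectors in the first summand map to genuine zonal functions while vectors in the second map to zero. Your packaging via the linear map $\Phi$ and the direct splitting $e_i=v_i+w_i$ is slightly cleaner than the paper's version, which instead chooses a full basis of $\R^m$ adapted to this decomposition and argues coordinate by coordinate, but the content is the same.
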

\begin{proof}
It is sufficient to prove the case $r=1$. For $r>1$, we can prove this theorem inductively by definition.  
Since the dimension of $(\Omega,\rho)$ is $n$, we have $n=\dim {\rm Pol}(\Omega,1)-1$. Since $\tau$ is affinely compatible with $\rho$,  by ${\rm Pol}(\Omega,1)=Z(\Omega,1)$, 
\begin{align*}
{\rm Pol}(\Omega,1)&={\rm Span}_\R \{f\circ \rho_a(\xi) \mid a \in  \Omega, f \in \R_1[x]\}\\
&={\rm Span}_\R\{ 1 \} + {\rm Span}_\R \{ \rho_a(\xi) \mid a \in  \Omega\}\\
&={\rm Span}_\R\{ 1 \} + {\rm Span}_\R \{ (\tau(a),\tau(\xi)) \mid a \in  \Omega\},
    \end{align*}
and hence the dimension of $V= {\rm Span}_\R \{ (\tau(a),\tau(\xi)) \mid a \in  \Omega\}$ is $n$ or $n+1$. 

It is sufficient to prove that 
\begin{equation}
    \label{eq:1}
    \{f \circ \tau :\Omega \rightarrow \R \mid f \in \R[x_1,\ldots, x_m], \deg f = 1, \text{$f$ is homogeneous}\}\subset V,
\end{equation}
because the reverse inclusion is clear. 
We need to prove that for each $i\in \{1,\ldots,m\}$, $x_i \circ \tau(\xi) =(e_i,\tau(\xi))$ belongs to $V$, where $\{e_i\}$ is the standard basis of $\mathbb{R}^m$.
From the linearity of the inner product, it suffices to prove that $(v_i,\tau(\xi)) \in V$ with some basis $\{v_i \}$ of $\R^m$. 

Let $n'=\dim V$. 
For $m=n$, the theorem is trivial, and we suppose $m>n$, that is, $m\geq n'$.  
Since the maximum rank of the Gram matrices  $[(\tau(x),\tau(y))]_{x,y \in X}$ over all finite subsets $X\subset \Omega$ is $n'$, we have $\dim {\rm Span}_\R\tau(\Omega)=n'$. 
Let $\{v_1,\ldots, v_{n'}\}$ be a basis of ${\rm Span}_\R \tau(\Omega)$. 
We can construct a basis $\{v_1,\ldots, v_m\}$ of $\R^m$ containing $\{v_1,\ldots, v_{n'} \}$ such that $v_k\perp \{v_1,\ldots, v_{n'} \}$ for each $k>n'$. 

For each $i\in\{1,\ldots,n' \}$, there exist $\lambda_a \in \R$ ($a \in \Omega$) such that 
\[
v_i=\sum_{a \in \Omega}\lambda_a \tau(a).
\]
By the linearity of the inner product, 
\[
(v_i,\tau(\xi)) = \sum_{a \in \Omega} \lambda_a (\tau(a),\tau(\xi)) \in V. 
\]
For any $i\in\{n'+1,\ldots, m \}$ and $a \in \Omega$, 
we have $(v_i,\tau(a))=0$, and hence 
\[
(v_i,\tau(\xi))=0 \in V
\]
on $\Omega$. Therefore, $\eqref{eq:1}$ holds, and this theorem follows. 
\end{proof}

Theorem \ref{thm:key} implies the following theorem that generalizes Victoir's method on combinatorial designs. 

\begin{theorem} \label{thm:m>=n}
 Let $(\Omega,\rho)$ be a Euclidean polynomial space of dimension $n$. 
Let $X \subset \Omega$ be a weighted $t$-design in $\Omega$ with  weight function $w$. 
Suppose there exists an injection $\tau: \Omega \rightarrow \R^{m}$ that is affinely compatible with $\rho$ for some $m\geq n$. Then it holds that 
\[
\la 1,f\circ \tau \ra 
=  \sum_{x \in X} w(x) f\circ \tau(x)
\]   
for any $f \in \R_t[x_1,\ldots, x_{m}]$. 
In particular, if $\Omega$ is a finite weighted set with probability function $\mu$, 
then
\[
  \sum_{x \in \Omega} f\circ \tau(x) \mu(x) 
=  \sum_{x \in X} w(x)f\circ \tau(x), 
\]  
and hence
\[
 \sum_{y \in \tau(\Omega)}  f(y) \mu(\tau^{-1}(y))
=  \sum_{y \in \tau(X)} w(\tau^{-1}(y))f(y)
\]
for any $f \in \R_t[x_1,\ldots, x_{m}]$. 
\end{theorem}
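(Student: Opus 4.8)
The plan is to reduce the statement directly to Theorem~\ref{thm:key} together with the defining property of a weighted $t$-design. First I would fix an arbitrary $f \in \R_t[x_1,\ldots,x_m]$ and observe that, by Theorem~\ref{thm:key} applied with $r=t$, the composite $f\circ\tau:\Omega\to\R$ lies in ${\rm Pol}(\Omega,t)$. This is the only place where the hypotheses that $\tau$ is affinely compatible with $\rho$ and that $m\geq n$ are actually used, and it is precisely what Theorem~\ref{thm:key} was engineered to supply.

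Next, since $X\subset\Omega$ is a weighted $t$-design with weight function $w$, Definition~\ref{def:t-design} gives $\la 1,g\ra=\sum_{x\in X}w(x)g(x)$ for every $g\in{\rm Pol}(\Omega,t)$. Taking $g=f\circ\tau$ then yields the first displayed identity
\[
\la 1,f\circ\tau\ra=\sum_{x\in X}w(x)\,f\circ\tau(x)
\]
at once. Thus the abstract part of the theorem is essentially a one-line consequence of Theorem~\ref{thm:key}.

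For the two ``in particular'' statements I would simply unfold the inner product. When $\Omega$ is finite and weighted by the probability function $\mu$, the inner product on ${\rm Pol}(\Omega)$ is $\la 1,h\ra=\sum_{x\in\Omega}h(x)\mu(x)$, so the left-hand side of the first identity becomes $\sum_{x\in\Omega}f\circ\tau(x)\,\mu(x)$, which is the second display. The third display is then obtained by the change of summation variable $y=\tau(x)$: because $\tau$ is injective, it restricts to bijections $\Omega\to\tau(\Omega)$ and $X\to\tau(X)$, so $\mu(\tau^{-1}(y))$ and $w(\tau^{-1}(y))$ are well defined and the two sums reindex to the stated form.

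I do not expect a genuine obstacle here, since the substantive work---extending Corollary~\ref{rem:com} from the equidimensional case $m=n$ to $m\geq n$---was already carried out in Theorem~\ref{thm:key}. The only point requiring a little care is the final reindexing, where one must invoke the injectivity of $\tau$ to pass from sums over $\Omega$ and $X$ to sums over their images $\tau(\Omega)$ and $\tau(X)$; this step is routine.
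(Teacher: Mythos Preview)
Your proposal is correct and matches the paper's own proof essentially line for line: the paper simply notes that Theorem~\ref{thm:key} gives $f\circ\tau\in{\rm Pol}(\Omega,t)$ and then invokes the definition of a weighted $t$-design. Your treatment of the ``in particular'' statements (unfolding the inner product and reindexing via the injectivity of $\tau$) is more explicit than the paper's, which leaves those routine steps implicit.
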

\begin{proof}
    It follows from Theorem \ref{thm:key} that $f\circ \tau \in {\rm Pol}(\Omega, t)$. 
    This theorem is clear by the definition of design. 
\end{proof}

\subsection{$t$-transitive group and symmetric group}

Initially, we provide basic terminology. Let $\mathfrak{S}_d$ denote the symmetric group of degree $d$. 
Let $[d]^{\{k\}}$ be the set of $k$-point subsets of $[d]=\{1,\ldots,d\}$. The group $\mathfrak{S}_d$ naturally acts on $[d]^{\{k\}}$. 
For $A,B \in [d]^{\{k\}}$, $[A;B]$ denotes the subset of $\mathfrak{S}_d$ which consists of all permutations that move $A$ to $B$. A non-empty subset $Z$ of $\mathfrak{S}_d$ is {\it $k$-homogeneous} if $Z \cap [A;B]$ is independent of $A,B \in [d]^{\{k\}}$ with $A\ne B$. 
It is known that a $k$-homogeneous set is $(k-1)$-homogeneous for $2\leq k\leq d/2$ \cite{N85}. 
A $k$-homogeneous set is a $(d-k)$-homogeneous set by considering the complement of $A\in [d]^{\{k\}}$. A subgroup $H$ of $\mathfrak{S}_d$ is {\it $k$-homogeneous} if $H$  acts transitively on $[d]^{\{k\}}$. A $k$-homogeneous group is a $k$-homogeneous set. 
Let $[d]^{(k)}$ be the set of ordered $k$-tuples of distinct elements of $[d]$. 
A subgroup $H$ of $\mathfrak{S}_d$ is {\it $k$-transitive} if $H$ acts transitively on $[d]^{(k)}$. 
A $k$-transitive subgroup is $(k-1)$-transitive. 

For $x=(x_1,\ldots, x_d)\in \mathbb{R}^d$ and a subset $Z$ of $\mathfrak{S}_d$, define
\[
Z\cdot x=\{(x_{\sigma(1)},\ldots, x_{\sigma(d)}) \mid \sigma \in Z\}.
\]
If $Z$ is a subgroup, then $Z \cdot x$ is the orbit of $x$ under the action of permutations of coordinates. 
Theorem 3.5 in Victoir \cite{V04} asserts that a $t$-homogeneous set $Z\subset \mathfrak{S}_d$ defines a weighted $t$-design, that is, 
\begin{equation*} 
\frac{1}{|\mathfrak{S}_d\cdot x|} \sum_{y \in \mathfrak{S}_d \cdot x} f(y)=\frac{1}{|Z\cdot x|} \sum_{y \in Z \cdot x} f(y)    
\end{equation*}
for any polynomial $f$ of degree at most $t$. Victoir omitted the proof of this theorem, merely stating to check it for the monomials. 
Unfortunately, there exists a counterexample to this theorem, and the assertion requires modification. 
Kantor \cite{K72} investigated a $k$-homogeneous group but not $k$-transitive for $d\geq 2k$. One example is $PSL(2,8) \subset S_9$, which is $4$-homogeneous but only $3$-transitive. Indeed, $PSL(2,8)$ is 9-homogeneous, but it does not satisfy \eqref{eq:vicdes} for some polynomials of degree 5. 
We should modify Victoir's theorem as follows. 

\begin{theorem} \label{thm:imp_vic}
Suppose $Z\subset \mathfrak{S}_d$ is a $(t-1)$-transitive group which is $t$-homogeneous. 
Then it holds that 
\begin{equation} \label{eq:vicdes}
\frac{1}{|\mathfrak{S}_d\cdot x|} \sum_{y \in \mathfrak{S}_d \cdot x} f(y)=\frac{1}{|Z\cdot x|} \sum_{y \in Z \cdot x} f(y)    
\end{equation}
for any $x\in \mathbb{R}^d$ and any $f \in \mathbb{R}_t[y_1,\ldots,y_d]$. 
\end{theorem}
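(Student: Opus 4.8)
The plan is to reduce the orbit sums in \eqref{eq:vicdes} to averages over the whole groups, and then to verify the resulting identity one monomial at a time, with a case split governed by the number of variables occurring in the monomial.

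First I would record the following standard consequence of the orbit--stabilizer theorem: for any subgroup $G\le \mathfrak{S}_d$ and any $x$, every point $y$ of the orbit $G\cdot x$ is of the form $\sigma\cdot x$ for exactly $|G_x|$ permutations $\sigma\in G$, where $G_x$ is the stabilizer of $x$. Consequently, for every function $f$,
\[
\frac{1}{|G\cdot x|}\sum_{y\in G\cdot x} f(y)=\frac{1}{|G|}\sum_{\sigma\in G} f(\sigma\cdot x).
\]
Taking $G=\mathfrak{S}_d$ and $G=Z$ shows that \eqref{eq:vicdes} is equivalent to
\[
\frac{1}{|\mathfrak{S}_d|}\sum_{\sigma\in\mathfrak{S}_d} f(\sigma\cdot x)=\frac{1}{|Z|}\sum_{\sigma\in Z} f(\sigma\cdot x),
\]
an identity I would establish for every $x\in\mathbb{R}^d$. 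By linearity it suffices to treat a single monomial $f(y)=\prod_{l=1}^{k} y_{i_l}^{b_l}$ with distinct indices $i_1,\dots,i_k$, exponents $b_l\ge 1$, and $\sum_l b_l\le t$; in particular $k\le t$. For such $f$ we have $f(\sigma\cdot x)=\prod_{l} x_{\sigma(i_l)}^{b_l}$, which depends on $\sigma$ only through the ordered tuple $(\sigma(i_1),\dots,\sigma(i_k))\in[d]^{(k)}$.

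Next I would split according to $k$. If $k\le t-1$, then $Z$, being $(t-1)$-transitive, is $k$-transitive, so as $\sigma$ runs over $Z$ the tuple $(\sigma(i_1),\dots,\sigma(i_k))$ realizes each element of $[d]^{(k)}$ the same number of times; hence the $Z$-average equals the uniform average of $\prod_l x_{j_l}^{b_l}$ over $(j_1,\dots,j_k)\in[d]^{(k)}$, which is also what the $\mathfrak{S}_d$-average equals. If $k=t$, then $\sum_l b_l\le t=k$ forces every $b_l=1$, so $f(\sigma\cdot x)=\prod_l x_{\sigma(i_l)}$ is symmetric in the indices and depends only on the unordered image $\{\sigma(i_1),\dots,\sigma(i_t)\}\in[d]^{\{t\}}$; now $t$-homogeneity makes this unordered image equidistributed over all $t$-subsets, and the $Z$-average again equals the common uniform average. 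This exhausts all monomials and proves the identity.

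The conceptual heart, and the exact point at which Victoir's original argument fails, is this dichotomy: a monomial of degree at most $t$ is fully symmetric precisely when it uses all $t$ variables with exponent one, and this is the only situation in which homogeneity alone is needed; every other monomial uses at most $t-1$ variables and genuinely requires transitivity of the ordered action. The part to be careful about is therefore the combinatorial bookkeeping of the case split --- in particular checking that $k=t$ forces unit exponents --- since this is what shows that $(t-1)$-transitivity together with $t$-homogeneity is the correct hypothesis, consistent with the $PSL(2,8)$ counterexample, which is $4$-homogeneous but only $3$-transitive and so fails at degree $5$.
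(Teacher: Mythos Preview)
Your proposal is correct and follows essentially the same route as the paper's proof: both reduce the orbit averages to group averages via orbit--stabilizer, check the identity monomial by monomial, use $k$-transitivity for monomials in $k\le t-1$ distinct variables, and observe that when $k=t$ all exponents are forced to be $1$ so that $t$-homogeneity suffices. The only difference is cosmetic---the paper isolates the $k$-transitive computation as a standalone claim before applying it, whereas you argue it inline.
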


\begin{proof}
First, we prove that for a given point $x\in \mathbb{R}^d$ and a monomial $f(y) = y_{i_1}^{\lambda_1} \cdots y_{i_k}^{\lambda_k}$, the value
\[
\frac{1}{|Z\cdot x|} \sum_{y \in Z \cdot x} y_{i_1}^{\lambda_1} \cdots y_{i_k}^{\lambda_k}
\]
is constant for every $k$-transitive group $Z \subset \mathfrak{S}_d$. Indeed, we have
\begin{align*}
\frac{1}{|Z\cdot x|} \sum_{y \in Z \cdot x} y_{i_1}^{\lambda_1}\cdots y_{i_k}^{\lambda_k}
&= \frac{1}{|Z\cdot x|\cdot |{\rm Stab}_Z(x)|} \sum_{\sigma \in Z} x_{\sigma(i_1)}^{\lambda_1}\cdots x_{\sigma(i_k)}^{\lambda_k} \\
&= \frac{|{\rm Stab}_Z(i_1,\ldots,i_k)|}{|Z|} \sum_{(j_1,\ldots,j_k) \in Z\cdot (i_1,\ldots,i_k)} x_{j_1}^{\lambda_1}\cdots x_{j_k}^{\lambda_k} \\
&= \frac{1}{|Z\cdot (i_1,\ldots,i_k)|} \sum_{(j_1,\ldots,j_k) \in Z\cdot (i_1,\ldots,i_k)} x_{j_1}^{\lambda_1}\cdots x_{j_k}^{\lambda_k} \\
&= \frac{1}{|[d]^{(k)}|} \sum_{(j_1,\ldots,j_k) \in [d]^{(k)}} x_{j_1}^{\lambda_1}\cdots x_{j_k}^{\lambda_k},
\end{align*}
where the last equality follows from the $k$-transitivity of $Z$.

For $1\leq k \leq t-1$, 
the equality \eqref{eq:vicdes} holds for all monomials $y_{i_1}^{\lambda_1} \cdots y_{i_k}^{\lambda_k}$ in $\mathbb{R}_t[y_1,\ldots,y_d]$, since both $\mathfrak{S}_d$ and $Z$ are $k$-transitive.  
For $k=t$, the monomials in $\mathbb{R}_t[y_1,\ldots, y_d]$ are of the form $y_{i_1} \cdots y_{i_t}$. 
By the $t$-homogeneity of $Z$, we similarly have
\[
\frac{1}{|Z\cdot x|} \sum_{y \in Z \cdot x} y_{i_1}\cdots y_{i_t} = \frac{1}{|[d]^{\{t\}}|} \sum_{(j_1,\ldots,j_t) \in [d]^{\{t\}}} x_{j_1}\cdots x_{j_t}.
\]
Thus, the equality \eqref{eq:vicdes} holds for any $f\in \mathbb{R}_t[y_1,\ldots,y_d]$. 
\end{proof}

\begin{remark} \label{rem:transitive}
A $k$-homogeneous group is a $(k-1)$-transitive group for $d\geq 2k$ \cite{LW65}. 
Moreover, if $k\geq 5$ and $d\geq 2k$, then a $k$-homogeneous group is $k$-transitive \cite{LW65}. 
\end{remark}

The following corollary is immediate. 
\begin{corollary} \label{cor:t-transitive}
     If $Z\subset \mathfrak{S}_d$ is a $t$-transitive group, 
    then $Z$ satisfies \eqref{eq:vicdes} for any polynomial of degree at most $t$. 
\end{corollary}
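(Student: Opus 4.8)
The plan is to deduce this immediately from Theorem~\ref{thm:imp_vic}: I would verify that a $t$-transitive group $Z\subset\mathfrak{S}_d$ satisfies both hypotheses of that theorem, namely that it is $(t-1)$-transitive and $t$-homogeneous, and then invoke the theorem.

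First I would note that $(t-1)$-transitivity comes for free: as recalled in the text preceding Theorem~\ref{thm:imp_vic}, a $k$-transitive subgroup of $\mathfrak{S}_d$ is $(k-1)$-transitive, so in particular a $t$-transitive $Z$ is $(t-1)$-transitive. Next I would show that $t$-transitivity forces $t$-homogeneity. Given two $t$-subsets $A,B\in[d]^{\{t\}}$, I would choose arbitrary orderings of their elements to obtain tuples in $[d]^{(t)}$; by $t$-transitivity there exists $\sigma\in Z$ sending the ordered version of $A$ to that of $B$, whence $\sigma(A)=B$. Thus $Z$ acts transitively on $[d]^{\{t\}}$, i.e.\ $Z$ is $t$-homogeneous. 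With both hypotheses in hand, Theorem~\ref{thm:imp_vic} yields \eqref{eq:vicdes} for every $f\in\mathbb{R}_t[y_1,\ldots,y_d]$.

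Alternatively, I could bypass the homogeneity step entirely by appealing to the monomial computation in the proof of Theorem~\ref{thm:imp_vic}: that computation shows that for any monomial in $k$ distinct variables, $k$-transitivity of $Z$ already makes the average $\frac{1}{|Z\cdot x|}\sum_{y\in Z\cdot x}y_{i_1}^{\lambda_1}\cdots y_{i_k}^{\lambda_k}$ equal to the full average over $\mathfrak{S}_d\cdot x$. Since a $t$-transitive group is $k$-transitive for every $k\le t$, and every monomial of degree at most $t$ involves at most $t$ distinct variables, this covers all of $\mathbb{R}_t[y_1,\ldots,y_d]$ directly.

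There is essentially no obstacle here; the statement is a genuine corollary. The only point that needs a sentence of justification is the implication ``$t$-transitive $\Rightarrow$ $t$-homogeneous,'' and even that is a one-line orbit argument.
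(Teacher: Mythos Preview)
Your proposal is correct and matches the paper's approach: the paper simply states that the corollary is immediate, which amounts to exactly your first argument (a $t$-transitive group is trivially $(t-1)$-transitive and $t$-homogeneous, so Theorem~\ref{thm:imp_vic} applies). Your alternative route via the monomial computation is also valid but unnecessary here.
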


We would like to understand Corollary  \ref{cor:t-transitive} within the framework of polynomial space. 
Let $\rho(x,y)$ be the number of points fixed by $x^{-1}y$ for $x,y \in \mathfrak{S}_d$.  
The symmetric group $\Omega=\mathfrak{S}_d$ forms a spherical polynomial space with the separation function $\rho$. 
When a subset $X$ of $\Omega$ is a subgroup, it holds that $X$ is a $t$-design if and only if $X$ is a $t$-transitive group~\cite{G2}. 
The symmetric group $\mathfrak{S}_d$ 
has the structure of a commutative association scheme as a group scheme \cite[p.\ 54, Example 2.1(2)]{BIb}. A $t$-transitive group is also a $T$-design in the sense of Delsarte \cite{D73}, where $T$ consists of the set of primitive idempotents corresponding to the irreducible representations whose Young diagrams have level at most $t$ \cite[Equation (3.1)]{B84}. 
In particular, the primitive idempotent $E$ that defines a 1-design (as a design in the polynomial space) corresponds to the Young diagram $(d-1,1)$. 
The matrix $E$ is the Gram matrix of a spherical injection $\tau$, and hence the dimension of the polynomial space $(\Omega, \rho)$ is equal to the rank of $E$, which is $(d-1)^2$.  
This dimension $(d-1)^2$ is larger than the dimension $d$ of a weighted design in \eqref{eq:vicdes}. Thus, Corollary \ref{cor:t-transitive} cannot be derived as a consequence of Theorem \ref{thm:m>=n}. 
As a further generalization of Theorem \ref{thm:m>=n}, we prove that if the design $X$ can be embedded into lower dimensions via an affine map, then the design property of $X$ is preserved.  

\begin{theorem} \label{thm:main}
Let $(\Omega,\rho)$ be a Euclidean polynomial space of dimension $n$. Let $X$ be a weighted $t$-design in $(\Omega, \rho)$ with weight function $w$.  
For $m\geq n$, let $\tau: \Omega \rightarrow \R^{m}$ be an injection affinely compatible with $\rho$. 
Let $\xi: \R^m \rightarrow \R^s$ be an affine map.  
Then,  $f \circ \xi \circ \tau \in {\rm Pol}(\Omega,t)$, and 
\[
(1,f\circ \xi \circ \tau)=
 \sum_{x \in X} w(x) f\circ \xi \circ \tau(x)
\]
for each $f \in \R_t[x_1,\ldots,x_s]$.
\end{theorem}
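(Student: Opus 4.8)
The plan is to deduce the statement directly from Theorem~\ref{thm:key} and the defining property of a weighted $t$-design, once we observe that composing a polynomial with an \emph{affine} map cannot raise its total degree. Thus the whole argument hinges on staying inside $\R_t[x_1,\ldots,x_m]$ after pre-composing $f$ with $\xi$.

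First I would verify that $f\circ\xi\in\R_t[x_1,\ldots,x_m]$. Writing $\xi(y)=Ay+b$ with $A\in\R^{s\times m}$ and $b\in\R^{s}$, each coordinate of $\xi$ is a polynomial of degree at most one in $x_1,\ldots,x_m$; substituting these into a polynomial $f$ of total degree at most $t$ produces a polynomial of total degree at most $t$ in the $m$ variables. This degree-preservation is the only place where the affineness of $\xi$ is used --- a general polynomial map could inflate the degree beyond $t$ --- so it is the conceptual heart of the theorem, even though the verification itself is elementary.

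Next, since $\tau$ is affinely compatible with $\rho$ and $m\geq n$, Theorem~\ref{thm:key} (with $r=t$) identifies ${\rm Pol}(\Omega,t)$ with $\{g\circ\tau\mid g\in\R_t[x_1,\ldots,x_m]\}$. Applying this with $g=f\circ\xi$ yields $f\circ\xi\circ\tau=(f\circ\xi)\circ\tau\in{\rm Pol}(\Omega,t)$, which is the first assertion.

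Finally I would invoke Definition~\ref{def:t-design}: because $X$ is a weighted $t$-design with weight $w$, equation~\eqref{eq:def_design} holds for every element of ${\rm Pol}(\Omega,t)$, and in particular for $f\circ\xi\circ\tau$, giving the claimed cubature identity. No genuine obstacle arises: once degree-preservation under the affine map is noted, Theorem~\ref{thm:key} and the design definition close the argument immediately.
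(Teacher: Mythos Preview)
Your proposal is correct and follows essentially the same argument as the paper's own proof: observe that $f\circ\xi\in\R_t[x_1,\ldots,x_m]$ because $\xi$ is affine, apply Theorem~\ref{thm:key} to obtain $(f\circ\xi)\circ\tau\in{\rm Pol}(\Omega,t)$, and then invoke the definition of a weighted $t$-design. The paper states this in two lines; you have simply spelled out the degree-preservation step in slightly more detail.
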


\begin{proof}
Note that $f \circ \xi \in \R_t[x_1,\ldots,x_m]$ for each $f \in \R_t[x_1,\ldots, x_s]$. It follows from Theorem~\ref{thm:key} that $(f \circ \xi) \circ \tau \in {\rm Pol}(\Omega,t)$. 
The assertion is clear from the definition of design. 
\end{proof}

It is noteworthy that an affine map $\xi$ for Theorem \ref{thm:main} is not required to be affinely compatible with $\rho$, and in such a case, $\xi$ is not necessarily injective. 

\begin{remark}
The present authors proposed a variant of Victoir's method using regular $t$-$(d,K,\lambda)$ designs \cite{NS13}. 
Theorem \ref{thm:main} applies to this case.
The corresponding Euclidean polynomial space $(\Omega, \rho)$ is defined in Example \ref{ex:2.4}. 
The dimension of $(\Omega, \rho)$ is $d$ when $|K|>1$. 
The injection $\tau: \Omega \rightarrow \{1,0\}^d \subset \mathbb{R}^d$, defined by the characteristic vectors, is affinely compatible with $\rho$. 
The injective affine map $\xi:\{1,0\}^d \rightarrow \{\alpha,
\beta\}^d$ is defined by $x \in \{1,0\}^d \mapsto (\alpha -\beta )x+ \beta j$, where $j$ is the all-ones vector. 
Considering the probability function $\mu$ defined in Example~\ref{ex:t-wise}, 
the specific weighted subset $X$ of a weighted $t$-design $Z$ in $\mathbb{R}^d$ 
is taken as $\bigcup_{k \in K} v_k(\alpha, \beta)$, with weight function $\mu(x)$. 
For the subset $Y \subset X$ corresponding to a regular $t$-$(d, K, \lambda)$ design, 
$X$ can be replaced by $Y$ as
\[
\sum_{x \in X} f(x) \mu(x) = \frac{1}{|Y|} \sum_{y \in Y} f(y),
\]
while preserving the $t$-design property.
\end{remark}

\begin{corollary} \label{cor:main_finite}
With the same setup as in Theorem \ref{thm:main},  we moreover assume that $\Omega$ is finite, $\mu \equiv 1/|\Omega|$, and $w\equiv 1/|X|$.   
Let $\Omega'=\xi\circ \tau(\Omega)$ and $X'=\xi\circ \tau(X)$. 
Assume $|(\xi\circ \tau)^{-1}(x)|$ is constant for all $x \in \Omega'$ and $|X \cap (\xi\circ \tau)^{-1}(x)|$ is constant for all $x \in X'$. Then it holds that 
\[
\frac{1}{|\Omega'|} \sum_{x \in \Omega'} f(x)=
\frac{1}{|X'|} \sum_{x \in X'} f(x)
\]
for each $f \in \R_t[x_1,\ldots,x_s]$. 
\end{corollary}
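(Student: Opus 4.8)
The plan is to deduce the corollary from Theorem~\ref{thm:main} by a direct fiber-counting argument: the two constancy hypotheses are exactly what is needed to convert the weighted identity of Theorem~\ref{thm:main} into an equi-weighted identity on the images $\Omega'$ and $X'$.

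First I would instantiate Theorem~\ref{thm:main} with $\mu \equiv 1/|\Omega|$ and $w \equiv 1/|X|$. Writing $\phi = \xi \circ \tau$ for brevity, the theorem together with the finite-set form of the inner product gives, for every $f \in \R_t[x_1,\ldots,x_s]$,
\[
\frac{1}{|\Omega|} \sum_{x \in \Omega} f(\phi(x)) = \frac{1}{|X|} \sum_{x \in X} f(\phi(x)).
\]
The entire content of the corollary is already contained in this line; what remains is purely combinatorial bookkeeping on each side.

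Next I would rewrite each side as a sum over the image by grouping terms along the fibers of $\phi$. Partitioning $\Omega$ into the fibers $\phi^{-1}(y)$ over $y \in \Omega' = \phi(\Omega)$ yields
\[
\sum_{x \in \Omega} f(\phi(x)) = \sum_{y \in \Omega'} |\phi^{-1}(y)|\, f(y).
\]
Since $|\phi^{-1}(y)|$ is assumed constant on $\Omega'$ and these fiber sizes sum to $|\Omega|$, each fiber has size $|\Omega|/|\Omega'|$; hence $\frac{1}{|\Omega|}\sum_{x\in\Omega} f(\phi(x)) = \frac{1}{|\Omega'|}\sum_{y\in\Omega'} f(y)$. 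The same computation applied to $X$, using that $|X \cap \phi^{-1}(y)|$ is constant on $X'$ so that each such intersection has size $|X|/|X'|$, gives $\frac{1}{|X|}\sum_{x\in X} f(\phi(x)) = \frac{1}{|X'|}\sum_{y\in X'} f(y)$.

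Combining these two rewrites with the displayed identity from Theorem~\ref{thm:main} produces the claimed equality. I do not expect any genuine obstacle: the only point requiring a moment's care is to confirm that the constant fiber sizes are forced to equal $|\Omega|/|\Omega'|$ and $|X|/|X'|$ respectively, which follows because the fibers $\phi^{-1}(y)$ partition $\Omega$ and the sets $X \cap \phi^{-1}(y)$ partition $X$. In other words, the constancy hypotheses are precisely the conditions under which the pushforward of the uniform weight along $\phi$ is again uniform on the image, and this is exactly what makes the normalization factors cancel.
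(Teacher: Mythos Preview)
Your proof is correct and follows essentially the same approach as the paper: both invoke Theorem~\ref{thm:main} to obtain $\frac{1}{|\Omega|}\sum_{y\in\Omega} f(\xi\circ\tau(y)) = \frac{1}{|X|}\sum_{y\in X} f(\xi\circ\tau(y))$, and then use the constancy hypotheses to compute the fiber sizes as $|\Omega|/|\Omega'|$ and $|X|/|X'|$ in order to pass to the averages over $\Omega'$ and $X'$. The paper presents this as a single chain of equalities rather than treating the two sides separately, but the content is identical.
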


\begin{proof}
    From our assumption, $|\Omega|=|(\xi\circ \tau)^{-1}(x)|\cdot |\Omega'|$ for each $x \in \Omega'$ and $|X|=|X\cap (\xi\circ \tau)^{-1}(x)|\cdot |X'|$ for each $x \in X'$. 
    Therefore, it follows that for each $f \in \R_t[x_1,\ldots, x_s]$, 
 \begin{align*}
    \frac{1}{|\Omega'|} \sum_{x \in \Omega'} f(x)
    & =\frac{1}{|\Omega|} \sum_{x \in \Omega'} |(\xi\circ \tau)^{-1}(x)|\cdot f(x) \\
    & =\frac{1}{|\Omega|} \sum_{y \in \Omega} f( \xi \circ \tau(y)) \\
    &=(1,f \circ \xi \circ \tau ) \\
    & = \frac{1}{|X|} \sum_{y \in X} f\circ \xi \circ \tau(y)\\
    & = \frac{1}{|X|} \sum_{y \in X} f( \xi \circ \tau(y)) \\
    & =\frac{1}{|X|} \sum_{x \in X'} |X\cap (\xi\circ \tau)^{-1}(x)|\cdot f(x) \\
    &= \frac{1}{|X'|} \sum_{x \in X'} f(x), 
 \end{align*}
where Theorem \ref{thm:main} is used in the fourth equality, which completes the proof.
\end{proof}

\begin{corollary} \label{cor:Nozaki_1}
With the same setup as in Theorem \ref{thm:main},  we moreover assume that $\Omega$ is a weighted finite set with
probability function $\mu$.  
We regard $\tilde{\Omega}=\xi\circ \tau(\Omega)$ as a multiset with multiplicity $|(\xi\circ \tau)^{-1}(x)|$ and $|\Omega|=|\tilde{\Omega}|$, and $\tilde{X}=\xi\circ \tau(X)$ is treated similarly.
Consider the function $(\xi \circ \tau)|_{\Omega}:\Omega \rightarrow \tilde{\Omega}$ as injective and write $y_x=((\xi\circ \tau)|_{\Omega})^{-1}(x)$. 
Then it holds that
\[
\sum_{x \in \tilde{\Omega}} f(x)  \mu(y_x)=
 \sum_{x \in \tilde{X}} w(y_x)  f(x)
\]
for each $f \in \R_t[x_1,\ldots,x_s]$. 
\end{corollary}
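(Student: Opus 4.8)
The plan is to deduce this purely as a bookkeeping reformulation of Theorem~\ref{thm:main}: the mathematical content is already contained in that theorem, and what remains is to rewrite both of its sides as sums over the multisets $\tilde{\Omega}$ and $\tilde{X}$ by a change of the summation variable. The one place requiring care is the convention that turns the possibly non-injective composition $\xi\circ\tau$ into a genuine bijection once its image is counted with multiplicity.

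First I would unfold the inner product. Since $\Omega$ is a finite weighted set with probability function $\mu$, we have $\la 1,g\ra=\sum_{y\in\Omega} g(y)\mu(y)$, so with $g=f\circ\xi\circ\tau$ the left side computed by Theorem~\ref{thm:main} is $\la 1,f\circ\xi\circ\tau\ra=\sum_{y\in\Omega} f(\xi\circ\tau(y))\mu(y)$. Next I would set up the bijection: writing $\phi=(\xi\circ\tau)|_{\Omega}$ and regarding $\tilde{\Omega}$ as the multiset in which each point $x$ occurs $|(\xi\circ\tau)^{-1}(x)|$ times, the map $\phi:\Omega\to\tilde{\Omega}$ becomes a bijection whose inverse is $x\mapsto y_x$, and moreover $x=\xi\circ\tau(y_x)$ for every $x\in\tilde{\Omega}$. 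Reindexing the sum over $\Omega$ by $x=\phi(y)$ then gives
\[
\la 1,f\circ\xi\circ\tau\ra=\sum_{y\in\Omega} f(\xi\circ\tau(y))\mu(y)=\sum_{x\in\tilde{\Omega}} f(x)\,\mu(y_x),
\]
which is exactly the left-hand side of the asserted identity.

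For the right-hand side I would apply the same reindexing on the design. By Theorem~\ref{thm:main} we have $\la 1,f\circ\xi\circ\tau\ra=\sum_{x\in X} w(x)\,f(\xi\circ\tau(x))$. Since $\phi$ is a bijection and $X\subseteq\Omega$, its restriction $\phi|_X:X\to\tilde{X}$ is again a bijection with inverse $x\mapsto y_x$, so that $y_x\in X$ for $x\in\tilde{X}$; reindexing by $x=\phi(z)$ yields $\sum_{z\in X} w(z)\,f(\xi\circ\tau(z))=\sum_{x\in\tilde{X}} w(y_x)\,f(x)$. Chaining these three equalities through the identity of Theorem~\ref{thm:main} gives the claim for every $f\in\R_t[x_1,\ldots,x_s]$. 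The main obstacle, such as it is, is purely notational: one must check that the stated multiplicity conventions make both $\phi$ and $\phi|_X$ honest bijections---equivalently, that labeling the copies of each point $x$ by their preimages identifies $\tilde{X}$ with the sub-multiset of $\tilde{\Omega}$ coming from $X$---after which the two reindexings are legitimate and the result carries no further analytic content.
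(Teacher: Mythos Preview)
Your proposal is correct and follows essentially the same approach as the paper: both arguments unfold the inner product $\la 1,f\circ\xi\circ\tau\ra$ as a sum over $\Omega$, invoke Theorem~\ref{thm:main} to pass to a sum over $X$, and then reindex each sum via the bijection $\Omega\to\tilde{\Omega}$ (and its restriction $X\to\tilde{X}$) afforded by the multiset convention. The paper presents this as a single chain of five equalities without dwelling on the bijection, whereas you make the reindexing step and its justification more explicit, but the content is identical.
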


\begin{proof}
    It follows that for each $f \in \R_t[x_1,\ldots , x_s]$,
\begin{align*}
 \sum_{x \in \tilde{\Omega}} f(x) \mu(y_x)
  & = \sum_{y \in \Omega} f( \xi \circ \tau(y)) \mu(y) \\
  & = (1,f \circ \xi \circ \tau ) \\
  & = \sum_{y \in X}w(y) f\circ \xi \circ \tau(y) \\
  & = \sum_{y \in X} w(y)f( \xi \circ \tau(y)) \\
  & = \sum_{x \in \tilde{X}} w(y_x) f(x), 
\end{align*}
where we use Theorem \ref{thm:main} in the third equality, 
which completes the proof.
\end{proof}

\begin{remark}
 We demonstrate how to provide a weighted design of $\eqref{eq:vicdes}$ by Corollary~\ref{cor:main_finite}. 
 Since a $t$-transitive group is a $t$-design in $(\mathfrak{S}_d,\rho)$,
 we aim to prove Corollary \ref{cor:t-transitive}. 
We use the injection 
$\tau: \mathfrak{S}_d \rightarrow  \R^{d^2-d} \simeq \{x \in \R^d \mid \sum_i x_i=1\}^d$ defined by
\[
\tau(\sigma) =(e_{\sigma(1)},\ldots, e_{\sigma(d)}),
\]
where $\{e_i\}$ is the standard basis of $\R^d$. 
The inner product of $\tau(\sigma)$ and $(e_i,e_i,\ldots e_i)$ is 1 for each $i \in \{1,\ldots, d\}$. From this fact, the codomain can be identified with $\R^{(d-1)^2}$, whose dimension coincides with the dimension of the polynomial space.  
The injection $\tau$ is affinely compatible with $\rho$. 
Letting $\xi_x:\R^{d^2}\rightarrow \R^d$ be the linear map defined by
\[
y \mapsto y \begin{pmatrix}
    x^\top &o &\cdot &o \\
    o &x^\top &\cdot &o \\
    && \ddots & \\
    o &o &\cdot & x^\top 
\end{pmatrix} \text{ with } x=(
    x_1,  x_2, \ldots ,x_d),
\]
one has 
\[
\xi_x(e_{\sigma(1)},\ldots, e_{\sigma(d)})=(
    x_{\sigma(1)},\ldots, x_{\sigma(d)})=\sigma \cdot x. 
\]
Therefore, we have $\xi_x \circ \tau(\mathfrak{S}_d) = \mathfrak{S}_d \cdot x$. 
Moreover, for any $\sigma \in \mathfrak{S}_d$, the preimage 
$(\xi_x \circ \tau)^{-1}(\sigma \cdot x)$ coincides with the left coset 
$\sigma \cdot {\rm Stab}_{\mathfrak{S}_d}(x)$, whose cardinality is independent of $\sigma$. 
Now, let $Z$ be a $t$-transitive subgroup of $\mathfrak{S}_d$. 
Then it holds that $\xi_x \circ \tau(Z) = Z \cdot x$, and for every $\sigma \in Z$, 
the size of the intersection $Z \cap (\xi_x \circ \tau)^{-1}(\sigma \cdot x)$ equals 
$|\sigma \cdot {\rm Stab}_Z(x)|$, which is constant as well. 
Therefore, Corollary~\ref{cor:main_finite} applies to this setting, and 
Corollary~\ref{cor:t-transitive} follows.
\end{remark}

\subsection{Orthogonal array and Hamming scheme with $q$ levels}
\label{sec:OA_any_level}

As an application of Corollary \ref{cor:main_finite}, we generalize Victoir's method on the Hamming scheme with $2$ levels to any $q$ levels.
Let $\Omega=[q]^d$ and $\rho$ be the Hamming distance.
The Hamming scheme $H(d,q)$ is identified with the spherical polynomial space $(\Omega,\rho)$ of dimension $d(q-1)$. 
A $t$-design in $\Omega$ is identified with an orthogonal array $\mathrm{OA}(N,d,q,t)$. 
We use the injection  $\tau: \Omega \rightarrow \R^{d(q-1)} \simeq \{x \in \R^q \mid \sum_i x_i=1\}^d$ defined by
\[\tau(i_1,\ldots, i_d)=(e_{i_1},\ldots, e_{i_d}),\]
which is affinely compatible with $\rho$. 
For $x=(x_1,\ldots,x_q) \in \R^q$, 
define $\xi_x: \R^{dq} \rightarrow \R^d$ be the linear map defined by
\[
y \mapsto y \begin{pmatrix}
    x^\top &o &\cdot &o \\
    o &x^\top &\cdot &o \\
    && \ddots & \\
    o &o &\cdot & x^\top 
\end{pmatrix} \text{ with } x=(
    x_1,  x_2, \ldots ,x_q),
\]
and one has
\[
\xi_x(e_{i_1},\ldots, e_{i_d})=(x_{i_1},\ldots, x_{i_d}). 
\]

Suppose $x_1,\ldots, x_q$ are mutually distinct. 
Then, $\Omega'=\xi_x \circ \tau(\Omega)=\{x_1,\ldots, x_q\}^d$ and the restriction $\xi_x|_{\tau(\Omega)}$ is injective, in particular $|(\xi_x\circ \tau)^{-1} (y)|=1$ for each $y \in \Omega'$.  
Let $X$ be a $t$-design in $\Omega$, that is, the set of runs of $\mathrm{OA}(|X|,d,q,t)$. 
Let $X'=\xi_x \circ \tau(X)$. 
Since $\xi_x|_{\tau(\Omega)}$ is injective, 
$|X\cap (\xi_x\circ \tau)^{-1} (y)|=1$ for each $y \in X'$. Therefore, we can apply 
Corollary \ref{cor:main_finite}, 
and for each $f\in \R_t[x_1,\ldots, x_d]$, 
\[
\frac{1}{|\Omega'|}\sum_{x \in \Omega'} f(x)=\frac{1}{|X'|}
\sum_{x \in X'} f(x)
\]
is satisfied.

\section{Applications for constructing small weighted designs} \label{sec:application}

We present applications of a generalization of Victoir’s method (Theorem \ref{thm:main}, Corollaries~\ref{cor:main_finite} and \ref{cor:Nozaki_1}), as described in Section~\ref{sec:generalization_Victoir}.
In the examples provided later, we reduce the size of large weighted designs obtained via the product rule by applying this generalized method together with orthogonal arrays of relatively small size. 

The product rule is a method for constructing a weighted $t$-design by taking the Cartesian product of lower-dimensional weighted $t$-designs, applied to a measure that is written as the Cartesian product of lower-dimensional measures; see, e.g., \cite{HW57}.

\begin{proposition}[Product rule] \label{prop:product_1}
Let $\mathbb{R}^d = \mathbb{R}^{d_1} \times \mathbb{R}^{d_2}$, and suppose a density function $\mu(x)$ on $\mathbb{R}^d$ can be written as a product $\mu(x) = \mu_1(x_1)\mu_2(x_2)$ for $x = (x_1, x_2)$, where each $\mu_i$ is a density function on $\mathbb{R}^{d_i}$ for $i = 1, 2$.  
If $(X_i, w_i(x_i))$ is a weighted $t$-design with respect to $\mu_i$ for $i=1,2$,  
then the pair $(X_1 \times X_2, w_1(x_1)w_2(x_2))$ forms a weighted $t$-design with respect to $\mu$. 
In particular, it readily generalizes to the case of multiple Cartesian products. 
\end{proposition}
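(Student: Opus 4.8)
The plan is to reduce to monomials by linearity, use the product structure of the measure to factor each integral, apply the one-block design property to each factor, and then reassemble. First I would observe that both sides of the defining identity
\[
\int_{\R^d} f(x)\, d\mu(x) = \sum_{x \in X_1 \times X_2} w_1(x_1) w_2(x_2)\, f(x)
\]
are linear in $f$, so it suffices to verify it when $f$ is a monomial of total degree at most $t$. Writing $x = (x_1, x_2)$ with $x_1 \in \R^{d_1}$ and $x_2 \in \R^{d_2}$, any such monomial factors as $f(x) = g(x_1)\, h(x_2)$, where $g$ is a monomial in the first block of variables and $h$ a monomial in the second block.

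The crucial observation is the degree bookkeeping: since $\deg g + \deg h = \deg f \le t$, each factor individually satisfies $\deg g \le t$ and $\deg h \le t$. Hence $g$ lies in the space of polynomials of degree at most $t$ in the first $d_1$ variables, and $h$ in the corresponding space for the last $d_2$ variables, which is exactly what is needed to invoke the design property of $(X_1, w_1)$ and $(X_2, w_2)$ separately. This is the precise point at which the total-degree hypothesis is used, and it is why the product rule preserves strength $t$ rather than forcing it to drop.

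Next I would factor the integral using $d\mu(x) = \mu_1(x_1)\mu_2(x_2)\, dx_1\, dx_2$ together with Fubini's theorem:
\[
\int_{\R^d} g(x_1) h(x_2)\, d\mu(x) = \Big(\int_{\R^{d_1}} g\, d\mu_1\Big)\Big(\int_{\R^{d_2}} h\, d\mu_2\Big).
\]
Applying the design property of each $(X_i, w_i)$ to the corresponding factor replaces each integral by a finite weighted sum, and expanding the product of the two sums gives
\[
\Big(\sum_{y \in X_1} w_1(y) g(y)\Big)\Big(\sum_{z \in X_2} w_2(z) h(z)\Big) = \sum_{(y,z) \in X_1 \times X_2} w_1(y) w_2(z)\, g(y) h(z),
\]
which is precisely the cubature sum for $f = g \cdot h$ over $X_1 \times X_2$ with weight $w_1 w_2$. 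Summing over the monomial decomposition of a general $f$ of total degree at most $t$ then completes this case.

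Finally, the extension to multiple Cartesian products follows by a routine induction on the number of factors, grouping $\R^d = \R^{d_1} \times (\R^{d_2} \times \cdots)$ and applying the two-factor statement at each step. I do not expect any serious obstacle: the entire argument is elementary, and its only genuine content is the degree-splitting observation above, together with the fact that $w_1 w_2 > 0$ is immediate from the positivity of $w_1$ and $w_2$, so the resulting weight function is admissible.
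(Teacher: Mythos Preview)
Your proof is correct and follows essentially the same approach as the paper: reduce to monomials by linearity, factor each monomial as $g(x_1)h(x_2)$, split the integral via the product structure of $\mu$, apply the design property to each factor, and reassemble. Your write-up is slightly more explicit than the paper's (you spell out the degree bookkeeping $\deg g + \deg h \le t$, invoke Fubini by name, note the positivity of $w_1 w_2$, and mention the induction for multiple factors), but the argument is the same.
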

\begin{proof}
Each monomial $\xi(x)$ on $\mathbb{R}^d$ can be expressed as the product of monomials $\xi_1(x_1)$ on $\mathbb{R}^{d_1}$ and $\xi_2(x_2)$ on $\mathbb{R}^{d_2}$; that is, $\xi(x) = \xi_1(x_1)\xi_2(x_2)$ for $x = (x_1, x_2)$. 

For each monomial $\xi(x)$ of degree at most $t$, we have
\begin{align*}
    \int_{\mathbb{R}^d} \xi(x)\mu(x) \, dx 
    &= \int_{\mathbb{R}^{d_1}} \xi_1(x_1) \mu_1(x_1) \, dx_1 \cdot \int_{\mathbb{R}^{d_2}} \xi_2(x_2) \mu_2(x_2) \, dx_2 \\
    &= \sum_{x_1 \in X_1} w_1(x_1) \xi_1(x_1) \cdot \sum_{x_2 \in X_2} w_2(x_2) \xi_2(x_2) \\
    &= \sum_{(x_1, x_2) \in X_1 \times X_2} w_1(x_1) w_2(x_2) \xi_1(x_1)\xi_2(x_2) \\
    &= \sum_{x \in X_1 \times X_2} w(x) \xi(x),
\end{align*}
where $w(x) = w_1(x_1) w_2(x_2)$ for $x = (x_1, x_2)$. This shows that $(X_1 \times X_2, w(x))$ is a weighted $t$-design, as claimed. 
\end{proof}

Indeed, given a one-dimensional weighted $t$-design $(X, w(x))$, the product set $X^d$ with the weight function $(x_1, \ldots, x_d) \mapsto \prod_{i=1}^d w(x_i)$ defines a weighted $t$-design in $\mathbb{R}^d$. 
If the weight function $w(x)$ is constant, then the resulting weight in higher dimensions is also constant.  
Although this construction is simple, the size of the resulting weighted design grows exponentially with the dimension.

\subsection{Orthogonal arrays with a small number of runs}

\label{sect:makeOA}
 
In this subsection, we construct orthogonal arrays as the dual codes of extended BCH codes, which are used to reduce the size of weighted designs in the following subsections.  
First, we review fundamental concepts and key results related to orthogonal arrays and linear codes.
See \cite[Chapter 7]{MSbook} and \cite[Chapter 5]{HSSbook} for details.  

Let ${\rm GF}(q)$ be the finite field of order $q$. 
A {\it linear code} consists of the vectors of a linear subspace of ${\rm GF}(q)^k$ as codewords. 
A linear code $C$ is called a $[k, N, d]_q$ code if
$C$ is a subspace of ${\rm GF}(q)^k$ with size $N = q^{\dim C}$ and minimum Hamming distance $d$. 

The following is a well-known sufficient condition for the codewords of linear codes to be the runs of orthogonal arrays.

\begin{theorem} \label{thm:linear_code_and_OA}
    Let $A$ be an $N \times k$ array whose rows consist of the codewords of a $[k, N, d]_q$ code. 
    If any $t$ columns of $A$ are linearly independent over ${\rm GF}(q)$, then $A$ is an $\mathrm{OA}(N,k,q,t)$. 
\end{theorem}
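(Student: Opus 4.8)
The plan is to fix an arbitrary choice of $t$ columns, say indexed by $J=\{j_1,\ldots,j_t\}$, and to study the coordinate projection $\pi_J\colon C\to {\rm GF}(q)^t$ defined by $\pi_J(c)=(c_{j_1},\ldots,c_{j_t})$, where $C$ is the $[k,N,d]_q$ code whose codewords are the rows of $A$. Since $C$ is a linear subspace of ${\rm GF}(q)^k$, this map is ${\rm GF}(q)$-linear, so its image is a subspace $W\subseteq {\rm GF}(q)^t$, and every nonempty fibre $\pi_J^{-1}(y)$ is a coset of $\ker\pi_J$, hence has cardinality $|\ker\pi_J|=|C|/|W|$. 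The whole argument reduces to showing that $W={\rm GF}(q)^t$, i.e.\ that $\pi_J$ is surjective.

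First I would translate the hypothesis on the columns of $A$ into surjectivity of $\pi_J$. The image $W$ is a \emph{proper} subspace of ${\rm GF}(q)^t$ exactly when it is annihilated by some nonzero linear functional, that is, when there exist $\alpha_1,\ldots,\alpha_t\in{\rm GF}(q)$, not all zero, with $\sum_{i=1}^t\alpha_i c_{j_i}=0$ for every codeword $c\in C$. Reading this condition across all rows of $A$, it says precisely that the $t$ columns of $A$ indexed by $J$ satisfy a nontrivial relation $\sum_{i=1}^t\alpha_i(\text{column }j_i)=0$ in ${\rm GF}(q)^N$. Consequently, the linear independence of these $t$ columns is equivalent to the nonexistence of such a relation, hence to $W={\rm GF}(q)^t$; that is, to $\pi_J$ being surjective.

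Granting surjectivity, every $y\in{\rm GF}(q)^t$ is attained, and by the coset count above each fibre has the same size $\lambda:=|C|/q^t=N/q^t$. Here $N=q^{\dim C}$ forces $\dim C\ge t$, so that $\lambda=q^{\dim C-t}\in\mathbb{Z}$, as required. Thus each $t$-tuple over the $q$ symbols occurs exactly $\lambda$ times as a row of the $N\times t$ subarray on the columns $J$. Since $J$ was an arbitrary set of $t$ columns, $A$ satisfies the defining property of an $\mathrm{OA}(N,k,q,t)$.

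I expect the only genuinely substantive step to be the equivalence in the second paragraph: recognizing that ``the $t$ columns of $A$ are linearly independent'' (a statement about vectors in the high-dimensional space ${\rm GF}(q)^N$) is the same as ``no nontrivial functional annihilates the projection of $C$ onto those coordinates,'' and therefore the same as surjectivity of $\pi_J$. Once this dictionary between column relations of $A$ and functionals vanishing on the projected code is in place, the remaining ingredient is the standard fact that a surjective linear map of finite-dimensional vector spaces has constant fibre size, from which the conclusion follows at once. (One could equivalently phrase the independence condition through a generator matrix of $C$, but the direct functional argument avoids choosing one.)
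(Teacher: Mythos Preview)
Your argument is correct. The paper does not actually give a proof of this theorem; it presents it as ``a well-known sufficient condition'' and refers the reader to the standard texts \cite{MSbook} and \cite{HSSbook}. Your proof is precisely the standard argument one finds in those references: translate column independence into surjectivity of the coordinate projection $\pi_J$, then use the linear-algebra fact that all fibres of a surjective linear map have equal size $|\ker\pi_J|=N/q^t$.
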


If an orthogonal array is obtained from a linear code $C$, 
then the strength $t$ is at least the minimum distance of the dual code $C^\perp=\{x \in {\rm GF}(q)^k \mid \forall y \in C, (x, y)=0\}$, where $(,)$ is the standard inner product of ${\rm GF}(q)^k$. 

A \textit{BCH code} is a cyclic code in ${\rm GF}(q)^k$ with generator polynomial
\[
g(x) = {\rm lcm}\{M^{(a)}(x), M^{(a+1)}(x), \ldots, M^{(a+d-2)}(x)\},
\]
where $M^{(i)}(x)$ is the minimal polynomial of $\xi^i$ over ${\rm GF}(q)$, and $\xi$ is a primitive $k$-th root of unity in ${\rm GF}(q^m)$, an extension field of ${\rm GF}(q)$ \cite[Chapter 7, Section 6]{MSbook}. Here, we assume that $a + d - 2 \leq k - 1$. 
Since $\xi$ is a primitive $k$-th root of unity in ${\rm GF}(q^m)$, the order $q^m - 1$ of the multiplicative group must be divisible by $k$. 
The BCH code is a $[k, q^{k - \deg g}, d']_q$ code for some $d' \geq d$. The parameters satisfy the following conditions:
\[
0 \leq a \leq k - 1, \quad 2 \leq d \leq k - a + 1, \quad m \geq 1, \quad k \mid (q^m - 1).
\]

For a linear code $C$ in ${\rm GF}(q)^k$,  
let $C^\#$ denote the linear space in ${\rm GF}(q^m)^k$ spanned by the elements of $C$ over ${\rm GF}(q^m)$.  
On the other hand, for a linear code $C^\#$ in ${\rm GF}(q^m)^k$,  
let $C^\#|{\rm GF}(q)$ denote the linear space $C^\# \cap {\rm GF}(q)^k$ over ${\rm GF}(q)$. 
The following ${\rm GF}(q)$-linear function $T_m: {\rm GF}(q^m) \rightarrow {\rm GF}(q)$ is called the \textit{field trace}:  
\[
T_m(x) = x + x^q + x^{q^2} + \cdots + x^{q^{m-1}}.  
\]
For $x = (x_1, \ldots, x_k) \in {\rm GF}(q^m)^k$, we define  
\[
T_m(x) = (T_m(x_1), \ldots, T_m(x_k)) \in {\rm GF}(q)^k.  
\]
The dual code of $C^\#|{\rm GF}(q)$ can be expressed as  
\begin{equation} \label{eq:dualkey}
(C^\#|{\rm GF}(q))^\perp = T_m((C^\#)^\perp),
\end{equation}
for which see \cite[Theorem 11, Chapter 7]{MSbook}.

Let $C$ be the BCH code in ${\rm GF}(q)^k$, defined as above.  
A parity-check matrix of $C^\#$ is given by  
\begin{equation} \label{eq:parity_check}
    \begin{pmatrix}
    1 & \xi^a & \xi^{2a} & \cdots & \xi^{(k-1)a} \\
    1 & \xi^{a+1} & \xi^{2(a+1)} & \cdots & \xi^{(k-1)(a+1)}\\
    \vdots& \vdots & \vdots & & \vdots \\
    1 & \xi^{a+d-2} & \xi^{2(a+d-2)} & \cdots & \xi^{(k-1)(a+d-2)}
\end{pmatrix}
\end{equation}
since for any codeword of $C^\#$, the corresponding polynomial has $\xi^a, \xi^{a+1}, \ldots, \xi^{a+d-2}$ as its roots.
The rows of a parity-check matrix form generators of the dual code $(C^\#)^\perp$. This implies the following lemma. 

\begin{lemma}\label{lem:dualofC}
    Let $C$ be the BCH code in ${\rm GF}(q)^k$, defined as above. Then,  
\[
(C^\#)^\perp = \{(f(1), f(\xi), f(\xi^2), \ldots, f(\xi^{k-1})) \mid f \in {\rm GF}(q^m)_{[a,a+d-2]}[x] \},  
\]
where ${\rm GF}(q^m)_{[i,j]}[x]$ denotes the linear space of polynomials over ${\rm GF}(q^m)$ whose degrees lie in the range $[i,j]$.
\end{lemma}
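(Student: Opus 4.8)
The plan is to translate the parity-check description into an explicit polynomial evaluation and then verify that this evaluation map is a bijection onto the dual code. First I would observe that the rows of the matrix in \eqref{eq:parity_check} are exactly the vectors $(\xi^{0\cdot i}, \xi^{1 \cdot i}, \ldots, \xi^{(k-1)\cdot i})$ for $i = a, a+1, \ldots, a+d-2$, which is precisely the evaluation vector $(f(1), f(\xi), \ldots, f(\xi^{k-1}))$ for the monomial $f(x) = x^i$. Since the rows of a parity-check matrix generate the dual code, and since the text has already established that this matrix is a parity-check matrix for $C^\#$ (each codeword polynomial vanishing at $\xi^a, \ldots, \xi^{a+d-2}$), the span over ${\rm GF}(q^m)$ of these $d-1$ rows is $(C^\#)^\perp$.

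Next I would note that taking ${\rm GF}(q^m)$-linear combinations of the monomial evaluation vectors $x^a, x^{a+1}, \ldots, x^{a+d-2}$ corresponds exactly to evaluating an arbitrary linear combination $f(x) = \sum_{i=a}^{a+d-2} c_i x^i$, i.e.\ an arbitrary element of ${\rm GF}(q^m)_{[a,a+d-2]}[x]$. This gives the inclusion of the claimed set into $(C^\#)^\perp$, and conversely every element of $(C^\#)^\perp$ is such a combination. The key linearity observation is that the evaluation map $f \mapsto (f(1), f(\xi), \ldots, f(\xi^{k-1}))$ is ${\rm GF}(q^m)$-linear, so it carries the span of the monomials to the span of their images.

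The one point requiring genuine care is the dimension count, to confirm that the correspondence is exact rather than merely an inclusion. The space ${\rm GF}(q^m)_{[a,a+d-2]}[x]$ has dimension $d-1$ over ${\rm GF}(q^m)$, matching the $d-1$ rows of the parity-check matrix; I would argue that the evaluation map restricted to this polynomial space is injective. Here I expect the main obstacle: the evaluation points $1, \xi, \ldots, \xi^{k-1}$ are the distinct $k$-th roots of unity, and since the degrees in $[a, a+d-2]$ satisfy $a+d-2 \leq k-1 < k$, a nonzero $f$ in this range has fewer than $k$ roots, hence cannot vanish at all $k$ evaluation points, so its evaluation vector is nonzero. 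This shows the map is injective, so the image has dimension exactly $d-1$, which equals $\dim (C^\#)^\perp$; the inclusion is therefore an equality.

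Finally, I would assemble these observations: the monomial evaluation vectors generate $(C^\#)^\perp$ by the parity-check description, the evaluation map is ${\rm GF}(q^m)$-linear so its image on ${\rm GF}(q^m)_{[a,a+d-2]}[x]$ is the span of those vectors, and the injectivity argument confirms the dimensions agree. This yields the stated identity for $(C^\#)^\perp$, completing the proof of the lemma.
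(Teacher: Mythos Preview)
Your proposal is correct and follows essentially the same approach as the paper: both identify the rows of the parity-check matrix \eqref{eq:parity_check} with the evaluation vectors of the monomials $x^a,\ldots,x^{a+d-2}$, and use that the row space of a parity-check matrix is the dual code. Your injectivity/dimension argument is extra care the paper omits; for the set equality in the lemma it is not strictly needed (linearity of evaluation already gives that the image of ${\rm GF}(q^m)_{[a,a+d-2]}[x]$ equals the row span, hence $(C^\#)^\perp$), but it does no harm and incidentally confirms that the rows are independent.
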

\begin{proof}
The vectors corresponding to the monomials $f = x^a, x^{a+1}, \ldots, x^{a+d-2}$ form generators of the space.  
These generators coincide with the row vectors of the parity-check matrix~\eqref{eq:parity_check}.
\end{proof}

We present orthogonal arrays derived from the duals of extended BCH codes.  
The BCH code $C$ in ${\rm GF}(q)^k$ used in this study is defined by the generator polynomial  
\begin{equation} \label{eq:specific_g}
    g(x) = {\rm lcm}\{M^{(1)}(x), M^{(2)}(x), \ldots, M^{(t-1)}(x)\},
\end{equation}
with parameters $a = 0$, $d = t$, and $t \leq k=q^m-1$ ($\xi$ is the primitive element of ${\rm GF}(q^m)$). 
The {\it extended code} of $C^\# \subset {\rm GF}(q)^k$ is given by  
\[
\overline{C^\#} = \{(x_0, x_1, \ldots, x_k) \in {\rm GF}(q^m)^{k+1} \mid (x_1, \ldots, x_k) \in C^\#, \sum_{i=0}^{k} x_i = 0\}.
\]
From Lemma \ref{lem:dualofC}, one has
\[
(C^\#)^\perp = \{(f(1), f(\xi), f(\xi^2), \ldots, f(\xi^{k-1})) \mid f \in {\rm GF}(q^m)_{[1,t-1]}[x] \},   
\]
and hence $(\overline{C^\#})^\perp$ contains 
\[
F=\{(f(0),f(1), f(\xi), f(\xi^2), \ldots, f(\xi^{k-1})) \mid f \in {\rm GF}(q^m)_{[1,t-1]}[x] \}.   
\]
Note that $f(0)=0$ for each $f \in {\rm GF}(q^m)_{[1,t-1]}[x]$. 
The dimension of $(\overline{C^\#})^\perp$ is one greater than 
that of $(C^\#)^\perp$. 
Therefore, to obtain the whole code $(\overline{C^\#})^\perp$, the all-one vector must be added to the space $F$: 
\[
(\overline{C^\#})^\perp = \{(f(0),f(1), f(\xi), f(\xi^2), \ldots, f(\xi^{k-1})) \mid f \in {\rm GF}(q^m)_{t-1}[x] \}, 
\]
where ${\rm GF}(q^m)_{t-1}[x]$ denotes the linear space consisting of all polynomials of degree at most $t-1$. 
From \eqref{eq:dualkey}, we finally obtain the simple expression of the dual of the extended BCH code:
\begin{multline*}
    \overline{C}^\perp=(\overline{C^\#}|{\rm GF}(q))^\perp = T_m((\overline{C^\#})^\perp)\\
    =\{(T_m(f(0)),T_m(f(1)), T_m(f(\xi)), \ldots, T_m(f(\xi^{k-1}))) \mid f \in {\rm GF}(q^m)_{t-1}[x] \}. 
\end{multline*}

\begin{theorem} \label{thm:OA_2vr2}
Let $q$ be a prime power and $\xi$ a primitive element of ${\rm GF}(q^m)$ and $k=q^m-1$. 
    For any integer $t$ with $2\leq t\leq k$, the linear code 
    \[
    \mathcal{C}=\{(T_m(f(0)),T_m(f(1)), T_m(f(\xi)), \ldots, T_m(f(\xi^{k-1}))) \mid f \in {\rm GF}(q^m)_{t-1}[x] \}
    \]
    in ${\rm GF}(q)^{k+1}$ 
     forms an orthogonal array $\mathrm{OA}(N,q^m,q,t)$ with $N\leq q^{m(t-1)+1}= O((q^m)^{t-1})$ as $m\rightarrow \infty$.  
\end{theorem}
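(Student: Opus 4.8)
The plan is to invoke Theorem~\ref{thm:linear_code_and_OA} for the code $\mathcal{C}$, which has length $k+1=q^m$: its columns are indexed by the evaluation points $0,1,\xi,\ldots,\xi^{k-1}$, and these run over \emph{all} of ${\rm GF}(q^m)$ because $\xi$ is primitive. So the array $A$ whose rows are the codewords of $\mathcal{C}$ has $q^m$ columns and $q$ levels, and it suffices to establish two things: (i) any $t$ columns of $\mathcal{C}$ are linearly independent over ${\rm GF}(q)$, which by Theorem~\ref{thm:linear_code_and_OA} gives strength $t$; and (ii) the run size $N=|\mathcal{C}|$ satisfies $N\le q^{m(t-1)+1}$. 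The bulk of the work is in (i), and the delicate point is that the entries of $\mathcal{C}$ are \emph{traces} $T_m(f(\alpha))$, so the natural manipulations live over ${\rm GF}(q^m)$ while the dependence we must exclude is over the subfield ${\rm GF}(q)$.

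For (i) I would fix distinct $\alpha_1,\ldots,\alpha_t\in{\rm GF}(q^m)$ indexing $t$ columns together with scalars $\lambda_1,\ldots,\lambda_t\in{\rm GF}(q)$ giving a linear relation among these columns. Since every codeword of $\mathcal{C}$ equals $\bigl(T_m(f(\alpha))\bigr)_{\alpha}$ for some $f\in{\rm GF}(q^m)_{t-1}[x]$, and since $T_m$ is ${\rm GF}(q)$-linear with each $\lambda_i\in{\rm GF}(q)$, the relation is equivalent to
\[
T_m\!\Bigl(\sum_{i=1}^t \lambda_i f(\alpha_i)\Bigr)=0 \qquad\text{for all } f\in{\rm GF}(q^m)_{t-1}[x].
\]
Specializing $f=c\,x^j$ with $c\in{\rm GF}(q^m)$ arbitrary and $0\le j\le t-1$ yields $T_m\bigl(c\sum_i \lambda_i\alpha_i^j\bigr)=0$ for all $c$; by nondegeneracy of the trace bilinear form $(a,b)\mapsto T_m(ab)$ on ${\rm GF}(q^m)$, this forces $\sum_{i=1}^t\lambda_i\alpha_i^j=0$ for every $j=0,1,\ldots,t-1$. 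This is a homogeneous system whose coefficient matrix is the $t\times t$ Vandermonde matrix $(\alpha_i^j)$ on the distinct nodes $\alpha_1,\ldots,\alpha_t$; its invertibility gives $\lambda_1=\cdots=\lambda_t=0$. Hence any $t$ columns are independent over ${\rm GF}(q)$, and Theorem~\ref{thm:linear_code_and_OA} delivers an orthogonal array of strength $t$. I expect this reduction, trace-form nondegeneracy followed by Vandermonde invertibility, to be the crux; once it is set up correctly, the bookkeeping over the two fields is routine.

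For (ii) I would track dimensions through the construction preceding the theorem. The narrow-sense BCH code $C$ with generator polynomial $g$ as in~\eqref{eq:specific_g} has $\dim_{{\rm GF}(q)}C=k-\deg g$, and because $g={\rm lcm}\{M^{(1)},\ldots,M^{(t-1)}\}$ is the least common multiple of $t-1$ minimal polynomials, each of degree at most $m$, we have $\deg g\le m(t-1)$. Extending by an overall parity check preserves the dimension, so $\dim_{{\rm GF}(q)}\overline{C}=k-\deg g$, and therefore
\[
\dim_{{\rm GF}(q)}\mathcal{C}=\dim_{{\rm GF}(q)}\overline{C}^{\perp}=(k+1)-(k-\deg g)=\deg g+1\le m(t-1)+1.
\]
Consequently $N=q^{\dim_{{\rm GF}(q)}\mathcal{C}}\le q^{m(t-1)+1}$, and since $q^m=k+1$ is the number of columns, $q^{m(t-1)+1}=q\,(q^m)^{t-1}=O\bigl((q^m)^{t-1}\bigr)$ as $m\to\infty$ with $q$ and $t$ fixed. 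Combining (i) and (ii) produces the asserted $\mathrm{OA}(N,q^m,q,t)$.
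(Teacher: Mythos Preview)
Your proof is correct, and both halves take a route that differs from the paper's own proof while reaching the same conclusions.

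For the strength-$t$ claim, the paper argues constructively: given $t$ evaluation points $i_1,\ldots,i_t$, it uses Lagrange interpolation in ${\rm GF}(q^m)_{t-1}[x]$ together with the surjectivity of $T_m$ (pick $a\in T_m^{-1}(1)$, $b\in T_m^{-1}(0)$) to exhibit polynomials $f_1,\ldots,f_t$ whose trace-evaluation vectors, restricted to these $t$ columns, form the standard basis of ${\rm GF}(q)^t$; these $t$ rows witness independence of the columns directly. Your argument is the dual of this: assume a ${\rm GF}(q)$-relation among the $t$ columns, pull the scalars through $T_m$, vary the coefficient $c$ to invoke nondegeneracy of the trace form, and finish with Vandermonde. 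Both are standard and short; yours avoids choosing explicit trace-preimages, while the paper's version makes the surjectivity onto ${\rm GF}(q)^t$ visible.

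For the run-size bound, the paper's proof is a one-line observation that the constant term of $f$ contributes only one ${\rm GF}(q)$-dimension to $\mathcal{C}$ (its trace lands in the all-ones direction), while the remaining $t-1$ coefficients in ${\rm GF}(q^m)$ contribute at most $m(t-1)$ dimensions, giving $\dim_{{\rm GF}(q)}\mathcal{C}\le m(t-1)+1$. Your computation $\dim\overline{C}^\perp=(k+1)-\dim\overline{C}=\deg g+1\le m(t-1)+1$ via the BCH description is exactly the alternative derivation that the paper records in the Remark immediately following its proof.
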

\begin{proof}
Since $\xi$ is a primitive element, $0,1,\xi,\ldots, \xi^{k-1}$ are mutually distinct. 
For each $t$-subset $\{i_1,\ldots,i_t\}$ of $\{0,1,\xi,\ldots, \xi^{k-1}\}$ and each $(j_1,\ldots,j_t) \in {\rm GF}(q^m)^t$,  
there exists $f\in {\rm GF}(q^m)_{t-1}[x]$ 
such that $f(i_s)=j_s$ for each $s \in \{1,\ldots, t\}$. 
In particular, we can take 
\[(j_1,\ldots,j_t)=(a,b,\ldots, b),(b,a,b,\ldots, b), \ldots, (b,\ldots,b,a)\]
for $a \in T_m^{-1}(1)$ and $b \in T_m^{-1}(0)$, and we can obtain $f_1,\ldots,f_t \in {\rm GF}(q^m)_{t-1}[x]$ such that 
\[
(T_m(f_s(i_1)),\ldots, T_m(f_s(i_t)))=e_s \qquad (s=1,2,\ldots, t), 
\]
where $\{e_s\}$ is the standard basis of ${\rm GF}(q)^t$. This implies that
any $t$-column vectors of the array corresponding to the code $\mathcal{C}$ are linearly independent. 
From Theorem \ref{thm:linear_code_and_OA}, 
the array forms an orthogonal array $\mathrm{OA}(N,k,q,t)$. 

The constant term of polynomial $f$ contributes the dimension of $\mathcal{C}$ only by one, and hence $\dim \mathcal{C} \le m(t-1)+1$. 
This implies that $N\leq q^{m(t-1)+1}$. 
\end{proof}

\begin{remark}
Reconfirming from the theory of BCH codes that the dimension of $ \mathcal{C} $ is at most $ m(t-1)+1 $, we obtain
\[
\dim \mathcal{C}=\dim {\overline{C}^\perp}=k+1-\dim \overline{C}=\deg g +1 \leq m(t-1)+1.
\]
The upper bound on $ \deg g $ follows from the definition \eqref{eq:specific_g}, and its exact value can be easily determined.  
If there are elements among $ \xi, \xi^2, \ldots, \xi^{t-1} $ that are mutually conjugate over $ GF(q) $ or belong to $ GF(q) $, then $ \deg g $ becomes smaller.    
\end{remark}
By puncturing a linear code at a coordinate position, we can obtain an orthogonal array with the same strength.

\begin{theorem} \label{thm:OA_any_d}
    Let $q$ be a prime power and $t$ an integer at least 2. 
    Then, for any integer $d$ with $t \leq d$, there exists an orthogonal array $\mathrm{OA}(N,d,q,t)$ with 
    $N<q^{t}d^{t-1}=O(d^{t-1})$ as $d\rightarrow \infty$. 
\end{theorem}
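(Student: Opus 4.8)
The plan is to reduce Theorem~\ref{thm:OA_any_d} to the family of orthogonal arrays produced in Theorem~\ref{thm:OA_2vr2} by combining two standard operations on orthogonal arrays: choosing an exponent $m$ so that $q^m$ is just large enough, and then \emph{puncturing} the array (deleting columns) to bring the number of columns down from $q^m$ to exactly $d$. The crux is a careful counting estimate to verify that the resulting run size $N$ satisfies the claimed bound $N<q^{t}d^{t-1}$, so the main work is bookkeeping with the inequality rather than any new structural idea.

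Let me make this precise. Given $q$, $t$, and a target dimension $d\geq t$, first I would choose the smallest integer $m$ with $q^m-1\geq d$, so that $k=q^m-1\geq d$ while $q^{m-1}-1<d$, i.e.\ $q^{m-1}\leq d$. Theorem~\ref{thm:OA_2vr2} gives an $\mathrm{OA}(N_0,q^m,q,t)$ with $N_0\leq q^{m(t-1)+1}$; note its number of columns is $q^m=(q^m-1)+1=k+1$, which is at least $d+1>d$. Since deleting any column from an $\mathrm{OA}(N_0,n,q,t)$ yields an $\mathrm{OA}(N_0,n-1,q,t)$ of the \emph{same} strength and run size (every $t$-subset of the remaining columns was already a $t$-subset of the original), I may puncture the array repeatedly until exactly $d$ columns remain, obtaining an $\mathrm{OA}(N,d,q,t)$ with $N=N_0\leq q^{m(t-1)+1}$.

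It then remains to bound $q^{m(t-1)+1}$ by $q^{t}d^{t-1}$. From the choice of $m$ we have $q^{m-1}\leq d$, hence $q^{m}\leq qd$ and therefore
\[
q^{m(t-1)+1}=q\cdot\bigl(q^{m}\bigr)^{t-1}\leq q\cdot(qd)^{t-1}=q^{t}d^{t-1}.
\]
To get the \emph{strict} inequality $N<q^{t}d^{t-1}$ claimed in the statement, I would observe that equality $q^{m-1}=d$ cannot hold simultaneously with all the slack being saturated: either $q^{m-1}<d$ strictly (the generic case, giving a strict inequality above), or else one should invoke the bound $\dim\mathcal C\leq m(t-1)+1$ being non-tight because $g(x)$ has degree strictly less than $m(t-1)$ whenever some of $\xi,\xi^2,\ldots,\xi^{t-1}$ are Galois-conjugate over $\mathrm{GF}(q)$, as noted in the remark following Theorem~\ref{thm:OA_2vr2}; this forces $N<q^{m(t-1)+1}$ and closes the strict gap.

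The step I expect to be the main obstacle is precisely securing the \emph{strict} inequality uniformly in $d$: the chain above is clean but only gives $\leq$, and the boundary case $d=q^{m-1}$ (a prime power) needs separate care. The cleanest resolution is to note that $0,1,\xi,\ldots,\xi^{t-2}$ always contains a conjugate pair or a field element for $t\geq 2$ unless $t$ is tiny, so $\deg g<m(t-1)$ and thus $N\leq q^{\deg g+1}<q^{m(t-1)+1}\leq q^{t}d^{t-1}$; handling the small-$t$ corner cases (where $\deg g=m(t-1)$ genuinely occurs) by using the strict inequality $q^{m-1}<d$ instead completes the argument.
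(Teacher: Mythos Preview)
Your approach is essentially the paper's: invoke Theorem~\ref{thm:OA_2vr2} at a suitable exponent $m$, then puncture down to $d$ columns and bound $N$. The difference is the choice of $m$, and that difference is exactly where your strict-inequality argument breaks down.

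The paper takes $m$ with $q^{m-1}<d\le q^m$ (not $q^{m}-1\ge d$ as you do). The OA from Theorem~\ref{thm:OA_2vr2} has $q^m$ columns, so $q^m\ge d$ suffices; there is no need for $q^m-1\ge d$. With the paper's choice, one rewrites
\[
q^{m(t-1)+1}=q^{t}\,(q^{m-1})^{t-1}<q^{t}d^{t-1}
\]
directly from $q^{m-1}<d$ and $t-1\ge 1$, and the strict inequality is immediate.

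Your choice forces only $q^{m-1}\le d$, so the chain gives $\le$, and you try to recover $<$ by arguing $\deg g<m(t-1)$ via conjugacy among $\xi,\dots,\xi^{t-1}$. That fails already for $t=2$: then $g=M^{(1)}$ has degree exactly $m$, so $\deg g=m(t-1)$ and $N=q^{m(t-1)+1}$ on the nose. Your fallback (``use $q^{m-1}<d$ instead'') is unavailable precisely in the boundary case $d=q^{m-1}$ that caused the problem. So the corner case $t=2$, $d$ a power of $q$, is genuinely unhandled by your argument. The fix is simply to take $m$ as the paper does; then no case analysis is needed.
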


\begin{proof}
Let $m$ be the integer satisfying $q^{m-1}< d \leq q^m$. 
From Theorem~\ref{thm:OA_2vr2}, there exists a linear code $\mathcal{C}$ corresponding to an orthogonal array $\mathrm{OA}(N, q^m, q, t)$ with $N \leq q^{m(t-1)+1}$. 
Consider any projection $P:\mathcal{C} \rightarrow \mathrm{GF}(q)^d$ defined by selecting $d$ coordinates, i.e., 
$(x_1,\ldots,x_{q^m}) \mapsto (x_{i_1},\ldots,x_{i_d})$. 
Then the image $P(\mathcal{C})$ forms an orthogonal array $\mathrm{OA}(N', d, q, t)$, 
since the linear independence of any $t$ columns is preserved. 
Here, the run size $N'$ satisfies $
N' \leq N \leq q^{m(t-1)+1} < q^t d^{t-1}$. 
\end{proof}

\subsection{Explicit construction of equi-weighted $5$-designs with $O(d^4)$ points} \label{sect:appli0}

In this subsection, as an application of Corollary~\ref{cor:main_finite}, we provide an explicit construction of Gaussian $5$-designs in $\R^d$
with $O(d^4)$ points for any integer $d\geq 5$.

A {\it Gaussian $t$-design} in $\R^d$ is a weighted $t$-design with respect to the Gaussian measure $\mu(d\omega) = \pi^{-d/2} e^{- \sum_{i=1}^d \omega_i^2} d\omega_1 \cdots d\omega_d$ for $\omega=(\omega_1,\ldots,\omega_d)\in \R^d$~\cite{BB2005}.  
This measure on $\R^d$ is a Cartesian product of one-dimensional Gaussian measures:  
\[
\frac{1}{\pi^{d/2}}\exp\left(- \sum_{i=1}^d \omega_i^2\right) d\omega_1 \cdots d\omega_d = \prod_{i=1}^d\frac{1}{\sqrt{\pi}}\exp ( -\omega_i^2) d\omega_i. 
\]
We explicitly construct equi-weighted Gaussian $5$-designs in $\R^1$, and obtain higher-dimensional designs by applying the product rule.  
The size of the designs can be reduced using Corollary~\ref{cor:main_finite}, together with orthogonal arrays constructed in Subsection~\ref{sect:makeOA}.
 
The $k$-th moments of the one-dimensional Gaussian measure
\[
	a_k = \frac{1}{\sqrt{\pi}} \int_{-\infty}^\infty
	\omega^k e^{-\omega^2} d\omega,
	\quad k=0,1,\dotsc
\]
are calculated as 
\begin{equation}
\label{eq:moment_1}
	a_{2k} = \frac{(2k)!}{2^{2k} k!} = \frac{(2k-1)!!}{2^k},\quad
	a_{2k+1} = 0, \quad k=0,1,\ldots
\end{equation}

We consider a one-dimensional Gaussian $5$-design of type
\begin{equation} \label{eq:Gaussian0}
\frac{1}{2M+1} f(0) + \frac{1}{2M+1} \sum_{i=1}^M \{ f(z_i) + f(-z_i)
\} = \frac{1}{\sqrt{\pi}} \int_{-\infty}^\infty f(\omega) \; e^{-\omega^2}d\omega, \quad f \in \mathbb{R}_5[x], 
\end{equation}
whose weight is constant. 
By (\ref{eq:moment_1}), a design of type (\ref{eq:Gaussian0}) is equivalent to a solution of nonlinear equations of type
\begin{equation}
\label{eq:HK_0}
\begin{gathered} 
z_1^2 + \cdots + z_M^2 = \frac{2M+1}{4}, \\
z_1^4 + \cdots + z_M^4 = \frac{6M+3}{8},
\end{gathered}
\end{equation}
which we call {\em Hilbert-Kamke equations} following Cui, Xia and Xiang~\cite{CXX2019}.

Let $Z_i = z_i^2,\; i=1,\ldots,M$. 
We denote by $\mathcal{T}_{M-1}$ the $(M-1)$-dimensional standard simplex, and by $\mathbb{S}_r^{M-1}$ the $(M-1)$-dimensional sphere of radius $r := \sqrt{(6M+3)/8}$. Namely,
\[
\begin{gathered} 
\mathcal{T}_{M-1} = \{
(Z_1,\ldots,Z_M)
\in \mathbb{R}_{\ge 0}^M \mid
Z_1 + \cdots + Z_M = \tfrac{2M+1}{4}
\}, \\
\mathbb{S}_r^{M-1} =
\{
(Z_1,\ldots,Z_M) \in \mathbb{R}^M \mid Z_1^2 + \cdots + Z_M^2 = r^2
\}.
\end{gathered}
\]
It is not entirely obvious but shown that $\mathcal{C} := \mathcal{T}_{M-1} \cap \mathbb{S}_r^{M-1} \ne \emptyset$ if and only if $M \ge 3$. 

Let $\mathcal{T}_{M-1}'$ be the hyperplane containing $\mathcal{T}_{M-1}$.
Let $Q$ be the center of the circle $\mathcal{T}_{M-1}' \cap \mathbb{S}_r^{M-1}$. Then
\begin{equation}
\label{eq:HK_1}
Q = \Big( \frac{2M+1}{4M}, \ldots, \frac{2M+1}{4M} \Big).
\end{equation}
For any $P \in \mathcal{C}$, we have $OQ \perp PQ$ and so
\[
PQ^2 = OP^2 - OQ^2 = \frac{8M^2+2M-1}{16M} = \frac{(2M+1)(4M-1)}{16M}.
\]
Let
\begin{equation}
\label{eq:HK_2}
v = \Big(-4,-5,\ldots,-(M+2), \frac{(M-1)(M+6)}{2} \Big).
\end{equation}
Then we have $v \perp OQ$ and 
\[
\| v \|^2 = \frac{1}{12} M(M-1)(3M^2+37M+106).
\]
We take a point
$P=(Z_1,\ldots,Z_M)$ 
satisfying
\begin{equation}
\label{eq:HK_3}
\vec{QP} = \sqrt{\frac{(2M+1)(4M-1)}{16M}} \frac{1}{\| v \|} v.
\end{equation}
By (\ref{eq:HK_1}) and (\ref{eq:HK_2}),
we have
$Z_M > Z_1 > Z_2 > \cdots > Z_{M-1}$ 
and
\begin{align*}
Z_{M-1}
&= \frac{2M+1}{4M} - (M+2) \sqrt{\frac{12(2M+1)(4M-1)}{16M^2(M-1)(3M^2+37M+106)}} \\
& = \frac{\sqrt{2M+1}}{4M} \Big( \sqrt{2M+1} - (M+2) \sqrt{\frac{12(4M-1)}{(M-1)(3M^2+37M+106)}} \Big).
\end{align*}
Taking the difference between the squares of the first and second terms, we have
\[
2M+1 - \frac{12(M+2)^2(4M-1)}{(M-1)(3M^2+37M+106)}
= \frac{6M^4+23M^3-8M^2-287M-58}{(M-1)(3M^2+37M+106)}.
\]
Let $f(M) = 6M^4+23M^3-8M^2-287M-58$. Substituting $M = M'+3$, we have
\[
f(M) = 6(M')^4 + 95(M')^3 + 523(M')^2 + 934M'+116 > 0
\]
since $M' = M-3 \ge 0$. Therefore $X_{M-1}>0$ as desired.

We have explicitly constructed a Gaussian $5$-design of type (\ref{eq:Gaussian0}) and so by Proposition~\ref{prop:product_1} developed an explicit construction of equi-weighted Gaussian $5$-designs with $(2M+1)^d$ points in $d$ dimensions. To beat the curse of dimension, one may employ Corollary~\ref{cor:main_finite} and Theorem~\ref{thm:OA_any_d} when $q=2M+1$ is a prime power at least $7$. 

\begin{theorem}
\label{thm:HK_1}
For each $d \geq 5$, there exists a $d$-dimensional equi-weighted Gaussian $5$-design with $O(d^4)$ points.
\end{theorem}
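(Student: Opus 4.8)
The plan is to combine the three ingredients that have already been assembled in this subsection: the explicit one-dimensional Gaussian $5$-design of type \eqref{eq:Gaussian0}, the product rule (Proposition~\ref{prop:product_1}), and the size-reduction machinery of Corollary~\ref{cor:main_finite} together with the small orthogonal arrays of Theorem~\ref{thm:OA_any_d}. Concretely, for each $d\geq 5$ I would first fix a value of $M$ for which the support size $q=2M+1$ of the one-dimensional design is a prime power at least $7$; the preceding construction guarantees such a design exists for every $M\geq 3$, so I only need to secure the prime-power condition on $q$, which is harmless since infinitely many such $q$ exist and $M$ can be chosen as a small absolute constant independent of $d$.

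Having fixed the one-dimensional design with support $S=\{0,\pm z_1,\ldots,\pm z_M\}$ of size $q=2M+1$, the product rule yields an equi-weighted Gaussian $5$-design on $S^{d}\subset\R^d$ with $q^d=(2M+1)^d$ points. The key step is to recognize $S^d$ as $\Omega'=\xi_x\circ\tau(\Omega)$ for the Hamming scheme $\Omega=H(d,q)$, exactly as set up in Subsection~\ref{sec:OA_any_level}: taking $x=(0,z_1,-z_1,\ldots,z_M,-z_M)\in\R^q$ with mutually distinct coordinates, the map $\xi_x\circ\tau$ sends $H(d,q)$ bijectively onto $S^d$, so both $|(\xi_x\circ\tau)^{-1}(y)|=1$ on $\Omega'$ and the analogous condition on $X'$ hold. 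I would then invoke Theorem~\ref{thm:OA_any_d} to produce an orthogonal array $\mathrm{OA}(N,d,q,5)$ of strength $5$ with $N<q^{5}d^{4}=O(d^4)$ runs, viewing its run set as a $5$-design $X$ in $H(d,q)$.

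Applying Corollary~\ref{cor:main_finite} with $t=5$ then gives
\[
\frac{1}{|\Omega'|}\sum_{y\in\Omega'} f(y)=\frac{1}{|X'|}\sum_{y\in X'} f(y)
\qquad\text{for all }f\in\R_5[x_1,\ldots,x_d],
\]
where $X'=\xi_x\circ\tau(X)$. Since the left-hand side is the Gaussian integral of $f$ by the product rule, the equi-weighted set $X'$ is itself a Gaussian $5$-design, and $|X'|=N=O(d^4)$ because $q$ is a constant. This establishes the theorem.

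The main obstacle I anticipate is purely bookkeeping rather than conceptual: one must verify that the hypotheses of Corollary~\ref{cor:main_finite} are genuinely met, namely that $\xi_x$ is injective on $\tau(\Omega)$ (which forces the $q$ support values to be distinct, hence requires $z_i\neq z_j$ and $z_i\neq 0$, all clear from \eqref{eq:HK_3}) and that the fibre-size conditions hold identically with value $1$. A secondary point requiring a line of justification is the prime-power constraint: Theorem~\ref{thm:OA_any_d} is stated for $q$ a prime power, so I must confirm that at least one admissible $M\geq 3$ makes $q=2M+1$ a prime power $\geq 7$ (for instance $M=3$ gives $q=7$), and that this fixed constant $q$ does not affect the $O(d^4)$ asymptotics. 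Neither step presents a real difficulty, so the proof is essentially an assembly of the established components.
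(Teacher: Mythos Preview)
Your proposal is correct and follows essentially the same route as the paper's proof: fix a prime power $q=2M+1\geq 7$, build the one-dimensional equi-weighted Gaussian $5$-design of type \eqref{eq:Gaussian0}, take the $d$-fold product, and then replace $S^d$ by the image of an $\mathrm{OA}(N,d,q,5)$ from Theorem~\ref{thm:OA_any_d} via the machinery of Subsection~\ref{sec:OA_any_level} and Corollary~\ref{cor:main_finite}. Your additional bookkeeping (injectivity of $\xi_x$ from the distinctness of the $z_i$, the trivial fibre conditions, and the observation that a fixed $q$ leaves the asymptotics at $O(d^4)$) is exactly what the paper's proof implicitly relies on.
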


\begin{proof}
Let $q$ be an odd prime power with $q \geq 7$, and set $M = (q - 1)/2$.  
Since $M \geq 3$, we can explicitly construct a one-dimensional equi-weighted Gaussian $5$-design $X$ with $q$ points, as stated above.  
By the product rule in Proposition~\ref{prop:product_1},  
$X^d$ is a $d$-dimensional equi-weighted Gaussian $5$-design with $q^d$ points.  

By Theorem~\ref{thm:OA_any_d}, there exists an orthogonal array $\mathrm{OA}(N, d, q, 5)$ with $N = O(d^4)$.  
As explained in Subsection~\ref{sec:OA_any_level},  
the entire set $X^d$ can be replaced by the subset corresponding to $\mathrm{OA}(N, d, q, 5)$ while preserving the equi-weighted $5$-design property.  
This completes the proof.
\end{proof}

Similar arguments will also work for product measures in general, though the details are omitted here. 

A special product measure is the {\it equilibrium measure} \begin{equation} \label{eq:equilibrium1}
d\phi = \frac{d\omega_1 \cdots d\omega_d}{\pi^d \prod_{j=1}^d \sqrt{1 - \omega_i^2}}, \qquad (\omega_1,\ldots,\omega_d) \in (-1,1)^d,
\end{equation}
for which an equi-weighted $t$-design is explicitly constructed for values of $t \ge 5$ in general (Theorem~\ref{thm:equilibrium1}). 
The measure $d\phi$, also written by the $d$th product of the Chebyshev measure
\begin{equation} \label{eq:Chebyshev1}
\frac{d\omega}{\pi\sqrt{1-\omega^2}}, \qquad \omega \in (-1,1),
\end{equation}
has recently received attention in optimal design of experiments~\cite{HL2024}.
A classical fact in numerical analysis and related areas is the Chebyshev-Gauss quadrature, namely
\begin{equation} \label{eq:Chebyshev2}
\int_{-1}^1 \frac{f(\omega)}{\pi \sqrt{1-\omega^2}} d\omega = \frac{1}{n} \sum_{i=1}^n f \Big( \cos\Big(\frac{2i-1}{2n}\pi \Big) \Big), \qquad f \in \mathbb{R}_{2n-1}[\omega].
\end{equation}
Proposition~\ref{prop:product_1} is then applicable to the construction of equi-weighted $(2n-1)$-designs of size $n^d$ for equilibrium measure $d\phi$.
In particular, when $n$ is a prime power, Corollary~\ref{cor:main_finite} with Theorem~\ref{thm:OA_any_d} produces small-sized weighted designs for $d\phi$.

\begin{theorem}
\label{thm:equilibrium1}
Let $n$ be a prime power and $t = 2n - 1$.
Then for any $d\geq t$, there exists an equi-weighted $t$-design with $O(d^{t-1})$ points for equilibrium measure $\pi^{-d} \prod_{i=1}^d (1-\omega_i^2)^{-1/2} d\omega_1 \cdots d\omega_d$ on $(-1,1)^d$.
\end{theorem}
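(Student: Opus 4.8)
The plan is to mirror the strategy of the Gaussian case in Theorem~\ref{thm:HK_1}, exploiting that the equilibrium measure $d\phi$ factorizes as the $d$-fold product of the one-dimensional Chebyshev measure \eqref{eq:Chebyshev1}. The decisive simplification over the Gaussian setting is that a suitable one-dimensional equi-weighted $t$-design is handed to us for free: the Chebyshev--Gauss quadrature \eqref{eq:Chebyshev2} with the $n$ nodes
\[
x_i = \cos\Big(\frac{2i-1}{2n}\pi\Big), \qquad i = 1, \ldots, n,
\]
is precisely an equi-weighted $(2n-1)$-design for the Chebyshev measure, so no analogue of the Hilbert--Kamke equations \eqref{eq:HK_0} has to be solved.

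First I would record that, by the product rule (Proposition~\ref{prop:product_1}), the node set $X = \{x_1, \ldots, x_n\}$ produces a $d$-dimensional equi-weighted $t$-design $X^d$ for $d\phi$ with $n^d$ points and $t = 2n-1$; the weights stay constant because each factor has constant weight $1/n$. Next, since $n$ is a prime power by hypothesis and $t \leq d$, Theorem~\ref{thm:OA_any_d} supplies an orthogonal array $\mathrm{OA}(N, d, n, t)$ with $N < n^{t} d^{t-1} = O(d^{t-1})$, whose run set $A$ is a $t$-design in the Hamming scheme $H(d,n)$.

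The reduction step is then exactly the construction of Subsection~\ref{sec:OA_any_level}, i.e.\ Corollary~\ref{cor:main_finite} applied to $H(d,n)$: with $\tau$ the standard injection into $\{x \in \R^{n} \mid \sum_i x_i = 1\}^d$ and $\xi_x$ the linear map evaluating at the Chebyshev nodes $x = (x_1, \ldots, x_n)$, one has $\Omega' = \xi_x \circ \tau([n]^d) = X^d$ and $X' = \xi_x \circ \tau(A)$. Because the Chebyshev nodes are pairwise distinct, $\xi_x|_{\tau(\Omega)}$ is injective, so $|(\xi_x \circ \tau)^{-1}(y)| = 1$ on $\Omega'$ and $|A \cap (\xi_x \circ \tau)^{-1}(y)| = 1$ on $X'$; the constancy hypotheses of Corollary~\ref{cor:main_finite} therefore hold trivially, and it yields
\[
\frac{1}{|X^d|} \sum_{y \in X^d} f(y) = \frac{1}{|X'|} \sum_{y \in X'} f(y), \qquad f \in \R_t[\omega_1, \ldots, \omega_d].
\]
The left-hand side equals $\int f \, d\phi$ by the product rule, exhibiting $X'$ as an equi-weighted $t$-design for $d\phi$ with $|X'| = O(d^{t-1})$ points, as required.

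I do not anticipate a genuine obstacle, precisely because the one-dimensional building block is explicit and the combinatorial reduction is already packaged in Corollary~\ref{cor:main_finite}. The only points needing care are bookkeeping ones: confirming that the distinctness of the Chebyshev nodes makes the preimage-cardinality hypotheses of Corollary~\ref{cor:main_finite} automatic, and that the prime-power hypothesis on $n$ is exactly what Theorem~\ref{thm:OA_any_d} requires in order to furnish an $n$-level orthogonal array of strength $t = 2n-1$. Everything else is parallel to the proof of Theorem~\ref{thm:HK_1}.
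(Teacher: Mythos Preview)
Your proposal is correct and follows exactly the route the paper intends: the paper's own proof consists of the single sentence ``The proof proceeds similarly to that of Theorem~\ref{thm:HK_1},'' and your write-up spells out precisely that argument, using the Chebyshev--Gauss nodes \eqref{eq:Chebyshev2} as the one-dimensional equi-weighted $t$-design, the product rule (Proposition~\ref{prop:product_1}), Theorem~\ref{thm:OA_any_d} for the $\mathrm{OA}(N,d,n,t)$, and the reduction from Subsection~\ref{sec:OA_any_level} via Corollary~\ref{cor:main_finite}. Your verification that the distinctness of the Chebyshev nodes trivializes the preimage-cardinality hypotheses is exactly the bookkeeping the paper leaves implicit.
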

\begin{proof}
   The proof proceeds similarly to that of Theorem~\ref{thm:HK_1}. 
\end{proof}

\subsection{Existence of weighted $t$-designs with $O(d^{t-1})$ points and isometric embeddings} \label{sect:appli1}

As already seen in Theorem~\ref{thm:equilibrium1}, we can obtain an explicit construction of equi-weighted $t$-designs with $O(d^{t-1})$ points for equilibrium measure on hypercube $(-1,1)^d$.
In this subsection, we prove an existence theorem of weighted $t$-designs with $O(d^{t-1})$ points for the Gaussian measure $\pi^{-d/2}e^{-\sum_{i=1}^d} d\omega_1 \cdots d\omega_d$ on $\mathbb{R}^d$.
The reader will realize that similar arguments work for other product measures.
What we emphasize here is the connection between Gaussian designs and isometric embeddings of the classical finite-dimensional Banach spaces.

Given distinct
$z_1,\ldots,z_n \in \mathbb{R}$, 
there uniquely exist $\lambda_1,\ldots,\lambda_n \in \mathbb{R}$ such that
\begin{equation}
\label{eq:admissible_1}
a_k =
\sum_{i=1}^n \lambda_i z_i^k, 
\quad k=0,1,\ldots,n-1,
\end{equation} 
where $a_k$ denote, as in (\ref{eq:moment_1}), the $k$th moments with respect to the Gaussian measure
$e^{-\omega^2}d \omega/\sqrt{\pi}$. 
The weight vector
$\lambda(z) = (\lambda_1,\ldots,\lambda_n)$ 
is {\em admissible} if $\lambda_i > 0$ for all $i$~\cite[p.173]{Kuijlaars1995}. The famous Gauss-Hermite quadrature ensures that the set of admissible weight vectors is nonempty.

\begin{lemma}
[cf.~Proposition 2.2 of \cite{Kuijlaars1995}]
\label{lem:OA_1}
The set of admissible weight vectors is an open subset of the hyperplane $\sum_{i=1}^n \lambda_i = 1$.
\end{lemma}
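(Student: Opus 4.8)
The plan is to exhibit the admissible weight vectors as the image of a smooth submersion into the hyperplane, and then invoke the fact that submersions are open maps. Let $V(z)$ be the Vandermonde matrix with entries $z_i^k$ for $0\le k\le n-1$ and $1\le i\le n$, so that \eqref{eq:admissible_1} reads $V(z)\lambda=a$ with $a=(a_0,\dots,a_{n-1})^\top$. On the open set $D\subset\R^n$ of tuples with pairwise distinct entries, $V(z)$ is invertible, and $\Phi(z):=\lambda(z)=V(z)^{-1}a$ defines a smooth map $\Phi\colon D\to\R^n$. Taking $k=0$ in \eqref{eq:admissible_1} and using $a_0=\tfrac{1}{\sqrt\pi}\int_{-\infty}^\infty e^{-\omega^2}\,d\omega=1$ shows $\sum_i\lambda_i=1$ for every weight vector, so $\Phi$ maps into the hyperplane $H=\{\lambda:\sum_i\lambda_i=1\}$, and the set of admissible weight vectors is exactly $\Phi(D_+)$, where $D_+:=\Phi^{-1}(\{\lambda:\lambda_i>0\ \forall i\})$ is open in $D$ by continuity.

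The main step is to compute the differential $D\Phi$ and show it has rank $n-1$ at every point of $D_+$. Differentiating $V(z)\lambda=a$ with respect to $z_j$ gives $\partial_{z_j}\lambda=-V^{-1}(\partial_{z_j}V)\lambda$; since $\partial_{z_j}V$ is supported in its $j$th column, one gets $(\partial_{z_j}V)\lambda=\lambda_j w_j$ with $w_j=(0,1,2z_j,\dots,(n-1)z_j^{n-2})^\top$. Hence $D\Phi=-V^{-1}W\operatorname{diag}(\lambda)$, where $W=[\,w_1\mid\cdots\mid w_n\,]$. As $V^{-1}$ is invertible and $\operatorname{diag}(\lambda)$ is invertible on $D_+$ (all $\lambda_i>0$), we have $\operatorname{rank}D\Phi=\operatorname{rank}W$. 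Discarding the zero first row of $W$ and factoring $\operatorname{diag}(1,2,\dots,n-1)$ out of the remaining rows leaves the $(n-1)\times n$ block $[\,z_j^{\,k-1}\,]_{1\le k\le n-1,\ 1\le j\le n}$, a wide Vandermonde matrix of full row rank $n-1$ precisely because the $z_j$ are distinct. Therefore $\operatorname{rank}D\Phi=n-1$.

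Finally, since $\sum_i\lambda_i$ is constant, $D\Phi$ automatically takes values in the tangent space $TH=\{v:\sum_i v_i=0\}$, of dimension $n-1$; combined with the rank count, $D\Phi$ is onto $TH$, so $\Phi|_{D_+}\colon D_+\to H$ is a submersion. Because submersions between smooth manifolds are open maps, $\Phi(D_+)$ is open in $H$, which is the assertion.

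The only genuine obstacle is the rank computation for $D\Phi$; everything else is formal. Once $D\Phi$ is reduced to the wide Vandermonde block, full rank follows instantly from the distinctness of the nodes, while the positivity of the weights is used solely to keep $\operatorname{diag}(\lambda)$ invertible so that the rank does not drop. An equivalent presentation would apply the implicit function theorem to solve $\Phi(z)=\lambda$ for $z$ near $z^*$ given $\lambda$ near an admissible $\lambda^*=\Phi(z^*)$; I prefer the submersion formulation because it records this local surjectivity in one line.
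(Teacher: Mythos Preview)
Your proof is correct and follows essentially the same route as the paper's: both differentiate the Vandermonde system $V(z)\lambda=a$, obtain the Jacobian in the form $-V^{-1}W\operatorname{diag}(\lambda)$ (the paper writes this as $M_1^{-1}M_2M_1M_3$, with $W=M_2M_1$), and use invertibility of $V$ and $\operatorname{diag}(\lambda)$ on the admissible locus to reduce to $\operatorname{rank}W=n-1$. Your write-up is a bit more explicit than the paper's in linking the rank computation to openness via the submersion theorem, but the substance of the argument is identical.
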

\begin{proof}
Suppose
$z_1,\ldots,z_n$ 
are mutually distinct reals.
It follows from partial differentiation of both sides of (\ref{eq:admissible_1}) that
\[
0 =
\sum_{i=1}^n z_i^k \frac{\partial \lambda_i}{\partial z_j} + \lambda_j \cdot k z_j^{k-1}, 
\]
or equivalently the Jacobian $(\partial \lambda_i/\partial x_j)$ is written by $M_1^{-1}M_2M_1M_3$, where
\[
\begin{gathered}
M_1 = 
\left(\begin{array}{cccc}
1 & 1 & \cdots & 1 \\
z_1 & z_2 & \cdots & z_n \\
\vdots & \vdots & & \vdots \\
z_1^{n-1} & z_2^{n-1} & \cdots & z_n^{n-1} \\ 
\end{array} \right),
\;
M_2 =
\left(\begin{array}{ccccc}
        0 &        0 & \cdots &  \cdots  & 0 \\
        1 &        0 & \cdots &  \cdots  & 0 \\
        0 &        2 & \ddots &              & 0 \\
\vdots & \vdots & \ddots &  \ddots  & \vdots \\
        0 &        0 & \cdots &      n-1  & 0 \\
\end{array} \right), \\
M_3 = {\rm diag}(\lambda_1,\lambda_2,\ldots,\lambda_n).
\end{gathered}
\]
Then
${\rm rank}(\partial \lambda_i/\partial z_j) = n-1$ 
and thus the map
$z \mapsto \lambda(z)$ 
is locally surjective onto the hyperplane $\sum_{i=1}^n \lambda_i = 1$.
\end{proof}

Now, let $\Omega = [q]^d$ where $[q] = \{1,2,\ldots,q\}$, and $\rho$ be the Hamming distance. We define $\tau:\Omega \rightarrow \mathbb{R}^{dq}$ to be
\[
\tau(i_1,\ldots,i_d) = (e_{i_1},\ldots,e_{i_d}),
\]
where $e_1,\dots,e_d$ are the standard basis of $\mathbb{R}^d$.
Since $d - \rho(x,y) = (\tau(x), \tau(y))$ for every pair $\{x,y\} \subset \Omega$, the map 
$\tau$ is affinely compatible with $\rho$. Clearly,
\[
\tau(\Omega) \subset \Big\{ (\omega_1,\ldots,\omega_q) \in \mathbb{R}^q \mid \sum_{i=1}^q \omega_i = 1 \Big\}^d \simeq \mathbb{R}^{d(q-1)}
\]
and the Hamming scheme $H(d,q)$ can be identified with the spherical polynomial space $(\Omega,\rho)$ of dimension $d(q-1)$.
For $\omega = (\omega_1,\ldots,\omega_q) \in \mathbb{R}^q$, where the entries $\omega_i$ are not necessarily distinct, we consider a linear map $\xi_\omega: \mathbb{R}^{dq} \rightarrow \mathbb{R}^d$ defined by
\[
\xi_\omega(y) =
y \left(\begin{array}{cccc}
\omega^\top &       0        & \cdots &        0       \\
      0        & \omega^\top & \cdots &        0       \\
  \vdots    &    \vdots   &            &    \vdots   \\
      0        &       0        & \cdots & \omega^\top \\
\end{array} \right).
\]
Then
\[
\tilde{\Omega} = (\xi_\omega \circ \tau)(\Omega) = \{ \omega_1,\ldots,\omega_q \}^d
\]
in the sense of multiset.
Let $\tilde{X} = (\xi_\omega \circ \tau)(X)$ as multiset, where $X$ is a $t$-design in $\Omega$, i.e. the set of runs of $\mathrm{OA}(|X|,d,q,t)$.
By Corollary~\ref{cor:Nozaki_1} with $\mu(x) \equiv 1/|\Omega|=1/|\tilde{\Omega}|$ and $w(x)\equiv 1/|X|=1/|\tilde{X}|$,
\begin{equation}
\label{eq:design_3}
\frac{1}{|\tilde{\Omega}|} \sum_{x \in \tilde{\Omega}} f(x) = \frac{1}{|\tilde{X}|} \sum_{x \in \tilde{X}} f(x) \ \text{ for every $f \in \mathbb{R}_t[x_1,\ldots,x_d]$}.
\end{equation}

Now by Lemma~\ref{lem:OA_1}, there exists a sufficiently large positive integer $M$ such that for any prime power $q \ge M$, there exist positive integers $q_1, \ldots, q_{t+1}$ with $\sum_i q_i = q$, and
distinct reals $z_1,\ldots,z_{t+1}$, 
for which
\begin{equation}
\label{eq:design_4}
a_k = \sum_{i=1}^{t+1} \frac{q_i}{q}
z_i^k, 
\quad k=0,1,\ldots,t.
\end{equation}

\begin{proposition}
\label{prop:OA_2}
With the above setup, we moreover assume that there exists an $\mathrm{OA}(N,d,q,t)$.
Then, there exists an $N$-points Gaussian $t$-design in $\R^d$. 
\end{proposition}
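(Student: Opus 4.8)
The plan is to realize the product-rule Gaussian design as the image $\tilde{\Omega}$ and then descend to the smaller multiset $\tilde{X}$ by means of \eqref{eq:design_3}. First I would fix the vector $\omega=(\omega_1,\ldots,\omega_q)$ by letting exactly $q_i$ of its coordinates equal $z_i$ for each $i=1,\ldots,t+1$; this is possible because $\sum_{i=1}^{t+1} q_i=q$, and although the $z_i$ are distinct, $\omega$ itself has repeated entries. With this choice the uniform measure on the multiset $\{\omega_1,\ldots,\omega_q\}$ reproduces the one-dimensional Gaussian moments up to order $t$, since \eqref{eq:design_4} becomes
\[
\frac{1}{q}\sum_{j=1}^q \omega_j^k = \sum_{i=1}^{t+1}\frac{q_i}{q}\,z_i^k = a_k, \qquad k=0,1,\ldots,t.
\]

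Next I would check that $\tilde{\Omega}=\{\omega_1,\ldots,\omega_q\}^d$, equipped with the uniform weight $1/|\tilde{\Omega}|$, is a $d$-dimensional Gaussian $t$-design. Both the Gaussian measure and the uniform measure on $\tilde{\Omega}$ are $d$-fold Cartesian products, so for any monomial $\prod_{i=1}^d x_i^{k_i}$ with $\sum_i k_i\leq t$ the two expectations factor coordinatewise; each factor then agrees by the moment identity above, because $k_i\leq t$ for every $i$. Equivalently, one may simply invoke the product rule (Proposition~\ref{prop:product_1}) applied to the one-dimensional design produced in the previous step. Thus $\tilde{\Omega}$ with uniform weight exactly integrates every $f\in\mathbb{R}_t[x_1,\ldots,x_d]$ against the Gaussian measure on $\mathbb{R}^d$.

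Finally I would transfer the design property to $\tilde{X}$. The orthogonal array $\mathrm{OA}(N,d,q,t)$ is, by definition, a $t$-design $X$ in the Hamming scheme $(\Omega,\rho)$, which is exactly the hypothesis feeding Corollary~\ref{cor:Nozaki_1}; the latter yields \eqref{eq:design_3}, namely that $\tilde{X}$ reproduces the $\tilde{\Omega}$-average of every $f\in\mathbb{R}_t[x_1,\ldots,x_d]$. Combining this with the previous paragraph shows that $\tilde{X}$ with uniform weight $1/|\tilde{X}|$ also integrates every such $f$ against the Gaussian measure, hence is a Gaussian $t$-design. Since $\xi_\omega\circ\tau$ sends each of the $N$ runs of the array to a single point of $\mathbb{R}^d$, the multiset $\tilde{X}$ consists of $N$ points (coincident points may be consolidated into weighted points), which gives the desired $N$-point design.

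As for difficulty, there is no isolated hard step: the argument is an assembly of results already in hand, and the work lies in bookkeeping. The point to watch is that $\tilde{\Omega}$ and $\tilde{X}$ are multisets, so that the uniform probability function $\mu\equiv 1/|\Omega|$ and weight $w\equiv 1/|X|$ descend correctly through $\xi_\omega\circ\tau$; this is precisely what Corollary~\ref{cor:Nozaki_1} guarantees, and it is the mechanism by which the purely combinatorial strength-$t$ property of the orthogonal array is made to interact with the analytic moment conditions \eqref{eq:design_4}.
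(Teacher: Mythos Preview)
Your proposal is correct and follows essentially the same approach as the paper's own proof: interpret \eqref{eq:design_4} as a one-dimensional $q$-point Gaussian $t$-design, apply the $d$-fold product rule (Proposition~\ref{prop:product_1}) to obtain the design on $\tilde{\Omega}$, and then invoke \eqref{eq:design_3} to pass to $\tilde{X}$. The paper's proof is a two-line compression of exactly this argument; you have simply supplied the bookkeeping details that the paper omits.
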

\begin{proof}
Regard  (\ref{eq:design_4}) as a $t$-design with $q$ points and take the $d$-fold product rule in Proposition \ref{prop:product_1}. The result then follows from (\ref{eq:design_3}).
\end{proof}

Combining Proposition~\ref{prop:OA_2} and Theorem~\ref{thm:OA_any_d}, we obtain the following result.

\begin{theorem}
\label{thm:OA_1}
Let $t$ be an integer at least 2. Then for fixed sufficiently large prime power $q$ and any integer $d \geq t$, 
there exists a Gaussian $t$-design in $\mathbb{R}^{d}$ with $N$ points, where $N < q^{t}d^{t-1} = O(d^{t-1})$ as $d \to \infty$. 
\end{theorem}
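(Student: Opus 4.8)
The plan is to combine the two tools already built: Proposition~\ref{prop:OA_2}, which converts an orthogonal array of strength $t$ into a Gaussian $t$-design with the same number of points, and Theorem~\ref{thm:OA_any_d}, which produces an $\mathrm{OA}(N,d,q,t)$ with $N<q^t d^{t-1}$ for any $d\geq t$. The only subtlety is that Proposition~\ref{prop:OA_2} is available only once $q$ is large enough to support the admissible one-dimensional quadrature \eqref{eq:design_4}, so I must first fix $q$ appropriately before invoking the orthogonal array construction.

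First I would fix the threshold. By Lemma~\ref{lem:OA_1} and the discussion preceding Proposition~\ref{prop:OA_2}, there is an integer $M=M(t)$ such that for every prime power $q\geq M$ one can choose positive integers $q_1,\ldots,q_{t+1}$ with $\sum_i q_i=q$ and distinct reals $z_1,\ldots,z_{t+1}$ satisfying \eqref{eq:design_4}; this gives a one-dimensional Gaussian $t$-design supported on $q$ points (with the $t+1$ distinct values $z_i$ repeated $q_i$ times). I would therefore select once and for all a prime power $q\geq\max\{M,t\}$ and keep it fixed; note that $q$ depends only on $t$, not on $d$.

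Next I would feed this $q$ into the orthogonal array construction. For any integer $d\geq t$, Theorem~\ref{thm:OA_any_d} yields an $\mathrm{OA}(N,d,q,t)$ with $N<q^t d^{t-1}$. Applying Proposition~\ref{prop:OA_2} with this orthogonal array and the one-dimensional design \eqref{eq:design_4} produces a Gaussian $t$-design in $\mathbb{R}^d$ with exactly $N$ points. Since $q$ is now a fixed constant, the bound $N<q^t d^{t-1}=O(d^{t-1})$ holds as $d\to\infty$, which is precisely the claimed statement.

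I do not expect any real obstacle here, since both ingredients are already proved; the proof is essentially a bookkeeping combination. The one point requiring care is the order of quantifiers: the threshold $M$ must be fixed \emph{before} choosing $q$, and $q$ must be fixed \emph{before} letting $d$ vary, so that the constant $q^t$ in the run-size bound is genuinely independent of $d$. Once this is observed, the proof is a one-line citation of Proposition~\ref{prop:OA_2} together with Theorem~\ref{thm:OA_any_d}.
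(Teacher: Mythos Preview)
Your proposal is correct and matches the paper's own argument, which simply states that the theorem follows by combining Proposition~\ref{prop:OA_2} with Theorem~\ref{thm:OA_any_d}. Your added care about the order of quantifiers (fixing $q\geq M(t)$ before letting $d$ vary) makes explicit exactly what the paper leaves implicit, but the approach is the same.
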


A Gaussian $2r$-design in $\mathbb{R}^d$ can be reduced to a weighted spherical design of index $2r$ (cf.~\cite{NS13}), which is equivalent to a Hilbert identity and a linear isometry between the classical finite-dimensional Banach spaces. 
Let $\ell_p^m$ denote the $m$-dimensional Banach space equipped with the $\ell_p$-norm.

\begin{proposition}
[\cite{LV1993}]
\label{prop:LV1993}
For $i=1, \ldots, N$, let ${y}_i = (y_{i1}, \ldots, y_{id}) \in \mathbb{S}^{d-1}$ with positive weight $w_i$. For any positive integer $r$, let
\begin{equation}
\label{eq:hilbert_1}
c_{d,r} = \frac{1}{|\mathbb{S}^{d-1}|} \int_{\mathbb{S}^{d-1}} \omega_{1}^{2r} \;
\nu(d\omega)
\end{equation}
where $\nu$ is the uniform measure on $\mathbb{S}^{d-1}$. 
We denote by ${\rm Hom}_{2r}(\mathbb{S}^{d-1})$ the space of homogeneous polynomials of degree $2r$ on $\mathbb{S}^{d-1}$. 
Then the following are equivalent:
\begin{enumerate}
\item[{\rm (i)}] (Spherical design of index $2r$). 
\begin{equation}
\label{eq:hilbert_2}
\frac{1}{|\mathbb{S}^{d-1}|} \int_{\mathbb{S}^{d-1}} f(\omega) \;
\nu(d\omega) 
= \sum_{i=1}^N w_i f(y_i) \ \text{ for every $f \in {\rm Hom}_{2r}(\mathbb{S}^{d-1})$}.
\end{equation}
\item[{\rm (ii)}] (Hilbert identity for $2r$-th powers). 
\begin{equation}
\label{eq:hilbert_3}
c_{d,r} (X_1^2 + \cdots + X_d^2)^r = \sum_{i=1}^N w_i (y_{i1} X_1 + \cdots + y_{id} X_d)^{2r}. 
\end{equation}
\item[{\rm (iii)}] (Isometric embeddings $\ell_2^d \hookrightarrow \ell_{2r}^N$). 
The following map $\iota: \ell_2^d \hookrightarrow \ell_{2r}^N$ is an isometric embedding: 
\begin{equation}
\label{eq:hilbert_4}
\iota(\omega) = \Big( \Big( \frac{w_1}{c_{d,r}} \Big)^{1/(2r)} \langle \omega, y_1 \rangle, \ldots, \Big( \frac{w_N}{c_{d,r}} \Big)^{1/(2r)} \langle \omega, y_N \rangle \Big),\; \omega \in \mathbb{R}^d.
\end{equation}
\end{enumerate}
\end{proposition}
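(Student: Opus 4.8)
The plan is to prove the cycle (i) $\Leftrightarrow$ (ii) $\Leftrightarrow$ (iii), treating the equivalence (ii) $\Leftrightarrow$ (iii) as a direct computation and concentrating the substantive work on (i) $\Leftrightarrow$ (ii). For (ii) $\Leftrightarrow$ (iii), I note that $\iota$ is a linear map, so it is an isometric embedding $\ell_2^d \hookrightarrow \ell_{2r}^N$ precisely when $\|\iota(\omega)\|_{2r}^{2r} = \|\omega\|_2^{2r}$ for all $\omega \in \mathbb{R}^d$. Since $2r$ is even, the absolute values in the $\ell_{2r}$-norm are harmless, and a direct computation gives
\[
\|\iota(\omega)\|_{2r}^{2r} = \sum_{i=1}^N \frac{w_i}{c_{d,r}} \langle \omega, y_i \rangle^{2r}.
\]
Multiplying through by $c_{d,r}$ and writing $\omega = (X_1,\ldots,X_d)$ shows that this norm identity is literally \eqref{eq:hilbert_3}. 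Thus (ii) and (iii) are two readings of the same polynomial identity, and no further argument is needed.

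For (i) $\Rightarrow$ (ii), I fix a vector $u = (X_1,\ldots,X_d) \in \mathbb{R}^d$ and observe that the ridge function $\omega \mapsto \langle \omega, u\rangle^{2r}$ lies in ${\rm Hom}_{2r}(\mathbb{S}^{d-1})$. Applying \eqref{eq:hilbert_2} to this choice of $f$ yields
\[
\frac{1}{|\mathbb{S}^{d-1}|}\int_{\mathbb{S}^{d-1}} \langle \omega, u\rangle^{2r}\,\nu(d\omega) = \sum_{i=1}^N w_i \langle y_i, u\rangle^{2r}.
\]
The left-hand side is evaluated by rotational invariance of $\nu$: for a unit vector the integral is direction-independent and equals $c_{d,r}$ by \eqref{eq:hilbert_1}, so by homogeneity of degree $2r$ it equals $c_{d,r}\|u\|_2^{2r} = c_{d,r}(X_1^2 + \cdots + X_d^2)^r$. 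This is exactly \eqref{eq:hilbert_3}.

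For the converse (ii) $\Rightarrow$ (i), I would invoke the classical fact that the ridge powers $\{\langle \cdot, u\rangle^{2r} \mid u \in \mathbb{R}^d\}$ span ${\rm Hom}_{2r}(\mathbb{S}^{d-1})$; equivalently, every monomial $\omega^\alpha$ with $|\alpha| = 2r$ is a linear combination of such powers. Granting this, \eqref{eq:hilbert_3}, read for each fixed $u$ as the design identity \eqref{eq:hilbert_2} for $f = \langle \cdot, u\rangle^{2r}$, extends by linearity to all of ${\rm Hom}_{2r}(\mathbb{S}^{d-1})$, which establishes (i) and closes the cycle.

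The main obstacle is precisely this spanning lemma used in (ii) $\Rightarrow$ (i): one must know that pure $2r$-th powers of linear forms generate the entire space of degree-$2r$ homogeneous polynomials. I would isolate it as a preliminary lemma and prove it by polarization of the symmetric tensor $u^{\otimes 2r}$ (alternatively, by observing that the span is a nonzero $O(d)$-invariant subspace whose projection onto each irreducible summand $\|\omega\|^{2(r-j)}\mathcal{H}_{2j}$ of ${\rm Hom}_{2r}$ is nonzero, forcing it to be everything). Everything else in the argument is either rotational-invariance bookkeeping for the integral in \eqref{eq:hilbert_1} or a mechanical norm computation, so once the spanning lemma is in hand the three equivalences follow without difficulty.
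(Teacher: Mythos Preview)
The paper does not supply its own proof of this proposition; it is quoted verbatim from Lyubich--Vaserstein~\cite{LV1993} and used as a black box. Your argument is correct and is essentially the standard one: the equivalence (ii)\,$\Leftrightarrow$\,(iii) is a tautology once one expands $\|\iota(\omega)\|_{2r}^{2r}$, (i)\,$\Rightarrow$\,(ii) is the design identity tested on the ridge function $\langle\,\cdot\,,u\rangle^{2r}$ together with rotational invariance, and (ii)\,$\Rightarrow$\,(i) rests on the spanning lemma that $\{\langle\,\cdot\,,u\rangle^{2r}:u\in\mathbb{R}^d\}$ generates ${\rm Hom}_{2r}(\mathbb{R}^d)$. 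You have correctly isolated this last point as the only nontrivial ingredient, and both of your suggested proofs (polarization of $u^{\otimes 2r}$, or $O(d)$-irreducibility) are valid and standard.
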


Milman~\cite{Milman1988} (see also~\cite{LV1993})  proved that for any positive integers $d$ and $r$, there exists $N \le \dim {\rm Hom}_{2r}(\mathbb{R}^d) = O(d^{2r})$ for which 
there exists an isometric embedding $\ell_2^d \hookrightarrow \ell_{2r}^N$. The following is an asymptotic improvement of this result.

\begin{corollary}
\label{cor:OA_1}
Let $r$ be a positive integer. Then for fixed sufficiently large prime power $q$ and any integer $d \geq 2r$, 
there exists an integer $N \leq q^{2r}d^{2r-1} = O(d^{2r-1})$ as $d \to \infty$, such that there is an isometric embedding $\ell_2^{d} \hookrightarrow \ell_{2r}^N$.
\end{corollary}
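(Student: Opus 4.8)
The plan is to combine the existence result of Theorem~\ref{thm:OA_1} with the equivalence in Proposition~\ref{prop:LV1993}, passing through the reduction from Gaussian designs to spherical designs of index $2r$ indicated just before Proposition~\ref{prop:LV1993}. First I would apply Theorem~\ref{thm:OA_1} with $t=2r$: for fixed sufficiently large prime power $q$ and any $d\geq 2r$, this yields a Gaussian $2r$-design $\{(x_i,w_i)\}_{i=1}^N$ in $\R^d$ with positive weights $w_i$ and $N<q^{2r}d^{2r-1}$ points.

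Next I would convert this into a weighted spherical design of index $2r$ using the radial symmetry of the Gaussian measure. Writing $\omega=s\theta$ with $s=\|\omega\|$ and $\theta\in\mathbb{S}^{d-1}$, the integral of a homogeneous polynomial $f$ of degree $2r$ factors as
\[
\int_{\R^d} f(\omega)\,d\mu(\omega)=c_r\int_{\mathbb{S}^{d-1}} f(\theta)\,d\nu(\theta),\qquad c_r=\pi^{-d/2}\int_0^\infty s^{2r+d-1}e^{-s^2}\,ds>0,
\]
where $\nu$ is the surface measure. Discarding any design point at the origin (which contributes $0$ to homogeneous integrals) and projecting each remaining $x_i$ onto $y_i=x_i/\|x_i\|\in\mathbb{S}^{d-1}$, homogeneity gives $f(x_i)=\|x_i\|^{2r}f(y_i)$. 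Setting $\tilde w_i=w_i\|x_i\|^{2r}/(c_r|\mathbb{S}^{d-1}|)>0$ and combining the design identity with the factorization above produces
\[
\frac{1}{|\mathbb{S}^{d-1}|}\int_{\mathbb{S}^{d-1}} f(\theta)\,d\nu(\theta)=\sum_i \tilde w_i f(y_i),
\]
valid for every $f\in{\rm Hom}_{2r}(\mathbb{S}^{d-1})$. This is exactly condition (i) of Proposition~\ref{prop:LV1993} on $N'\leq N$ points.

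Finally, the equivalence (i)$\Leftrightarrow$(iii) of Proposition~\ref{prop:LV1993} yields an isometric embedding $\ell_2^d\hookrightarrow\ell_{2r}^{N'}$ with $N'\leq N<q^{2r}d^{2r-1}=O(d^{2r-1})$, which is the claimed bound. I expect the only delicate point to be the reduction step: one must verify that the rescaled weights stay positive and that origin points are correctly removed, so that the spherical identity holds for all homogeneous polynomials of degree $2r$ simultaneously. Since this is essentially the known passage from Gaussian to index-$2r$ spherical designs recorded in \cite{NS13}, the overall argument is short once Theorem~\ref{thm:OA_1} is available.
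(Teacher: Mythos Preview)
Your proposal is correct and follows exactly the route the paper takes: the paper's proof is a single sentence, ``The result follows from Theorem~\ref{thm:OA_1} and Proposition~\ref{prop:LV1993},'' relying on the remark just above Proposition~\ref{prop:LV1993} that a Gaussian $2r$-design reduces to a weighted spherical design of index $2r$ (citing \cite{NS13}). You have simply written out that reduction explicitly via polar coordinates, which is the standard argument behind the cited remark.
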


\begin{proof}
The result follows from Theorem~\ref{thm:OA_1} and Proposition~\ref{prop:LV1993}.
\end{proof}

\section{Concluding remarks} \label{sec:concluding}

We established a general framework for reducing the sizes of weighted designs using designs in Euclidean polynomial spaces.
The origin of this type of design theory lies in the concept of $Q$-polynomial association schemes \cite{D73}, which are developed using univariate orthogonal polynomials.
The notion of $Q$-polynomial association schemes has been extended to the multivariate setting in \cite{BKZZ25}.
A typical example is the nonbinary Johnson association scheme introduced in \cite{TAG85}, which generalizes both the Johnson and Hamming schemes. 
The structure of its bivariate $Q$-polynomial association scheme has been made explicit in \cite{BKZZ24} and \cite{CVZZ24}.
One direction for future research is to develop a design theory for multivariate $Q$-polynomial schemes and to apply such designs to our reduction method.
It is clear that the first target should be the nonbinary Johnson scheme. 

The known explicit constructions of spherical designs \cite{BNOZ2022,X22} build higher-dimensional designs by stacking lower-dimensional ones, as in the case of the product rule.
Although we have not yet established size reduction methods for such designs, we anticipate potential progress in this direction.

In fact, Bannai et al. \cite{BNOZ2022} presented an explicit construction of unitary designs, which is similar to the idea of the product rule. We are also interested in developing a unitary analogue of our reduction method, partly motivated by its relevance to quantum information theory. 


\bigskip

\noindent
\textbf{Acknowledgments.} 
H. Nozaki was partially supported by JSPS KAKENHI Grant Numbers 22K03402 and 24K06688. 
M. Sawa was partially supported by JSPS KAKENHI Grant Number 24K06871
and the Early Support Program for Grand-in-Aid for Scientific Research A/B of Kobe University.


\end{document}